\DeclareMathAlphabet\mathbfcal{OMS}{cmsy}{b}{n}
\newtheorem{theorem}{Theorem}[section]
\newtheorem{corollary}[theorem]{Corollary}
\newtheorem{lemma}[theorem]{Lemma}
\newtheorem{proposition}[theorem]{Proposition}
\theoremstyle{definition}
\newtheorem{definition}[theorem]{Definition}
\newtheorem{construction}[theorem]{Construction}
\theoremstyle{remark}
\newtheorem{remark}[theorem]{Remark}
\newcommand{\Z}{\mathbb{Z}}
\newcommand{\C}{\mathbb{C}}
\newcommand{\Q}{\mathbb{Q}}
\newcommand{\OO}{\mathcal{O}}
\newcommand{\pic}{\mathrm{Pic}}
\newcommand{\id}{\mathrm{id}}
\title{Holomorphic one-forms without zeros on K\"ahler manifolds of Kodaira codimension one}
\author{Simon Pietig}
\subjclass{32Q15, 32J18, 32Q57, 32H04}
\address{Leibniz Universit\"at Hannover, Institut f\"ur Algebraische Geometrie, Welfengarten 1, 30167 Hannover}
\address{\href{mailto:pietig@math.uni-hannover.de}{pietig@math.uni-hannover.de}}
\address{\href{https://orcid.org/0009-0001-1800-6799}{Orcid iD}}
\begin{document}
\begin{abstract}
    We give a bimeromorphic classification of compact K\"ahler manifolds of Kodaira codimension one that admit a holomorphic one form without zeros.
\end{abstract}
\maketitle
\setcounter{tocdepth}{1}
\tableofcontents
\section{Introduction}
Let $X$ be a smooth projective variety over $\C$ with a nowhere vanishing holomorphic 1-form $\omega\in H^0(X,\Omega^1_X)$. In \cite{Popa_Schnell_Kodaira_dimension_zeros_1-forms}, Popa and Schnell showed that the Kodaira dimension of $X$ is bounded by $\kappa(X)\leq\dim X-1$. To prove this result, Popa-Schnell applied the theory of mixed Hodge modules. A fortiori, Hao gave the following geometric explanation of this behavior, for $\kappa(X)=\dim X-1$.
\begin{theorem}\label{Hao_Kodaira_codim_1}\cite[][Thm. 1.3]{Hao_Nowhere_Vanishing_Holomorphic_One-Forms_on_Varieties_of_Kodaira_Codimension_One}
    Let $X$ be a smooth projective variety of Kodaira dimension $\kappa(X)=\dim X-1$. If $X$ has a nowhere vanishing holomorphic 1-form, then for any minimal model $X^{min}$ of $X$ there exists a finite quasi-\'etale covering $X'\rightarrow X^{min}$ such that any $\Q$-factorialization $X''$ of $X'$ is a product $B\times E$, where $B$ is a minimal model of general type and $E$ is an elliptic curve.
\end{theorem}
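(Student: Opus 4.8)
The plan is to run everything through the Iitaka fibration and reduce the statement to the classification of \emph{isotrivial} elliptic fibrations over a base of general type. After replacing $X$ by a suitable smooth birational model I would realise the Iitaka fibration as a morphism $f\colon X\to Y$ with $Y$ smooth of dimension $\kappa(X)=\dim X-1$. The very general fibre $F$ is then a smooth curve with $\kappa(F)=0$, hence an elliptic curve, so $f$ is an elliptic fibration and the whole problem becomes understanding how the nowhere vanishing form $\omega$ interacts with $f$. The target is to show that, after a finite quasi-\'etale cover, $f$ becomes a product projection $B\times E\to B$.

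The first key step is to restrict $\omega$ to the fibres. Over the open set $Y^\circ\subseteq Y$ of smooth fibres the relative cotangent sequence $0\to f^*\Omega^1_Y\to\Omega^1_X\to\Omega^1_{X/Y}\to 0$ sends $\omega$ to a relative form $\bar\omega\in H^0(f^{-1}(Y^\circ),\Omega^1_{X/Y})$. Since on an elliptic curve a nonzero holomorphic form vanishes nowhere, there is a clean dichotomy: either $\bar\omega\equiv 0$, so that $\omega=f^*\eta$ descends to a one-form $\eta$ on $Y$, or $\bar\omega$ restricts to a nowhere vanishing form on every smooth fibre. In the latter case $\bar\omega$ is a nowhere vanishing section of the Hodge bundle $f_*\Omega^1_{X/Y}\cong(R^1f_*\OO_X)^\vee$ over $Y^\circ$, trivialising it; a weight-one variation of Hodge structure with trivial Hodge bundle is unitary, so the period map is constant and $f$ is \emph{isotrivial} with fibre a fixed elliptic curve $E$. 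The descent case I would exclude by exploiting that $\omega$ vanishes nowhere: at any point of a multiple or singular fibre the differential $df$ drops rank, so $f^*\eta$ would vanish there; hence $\omega=f^*\eta$ forces $f$ to be a smooth submersion with no multiple fibres, and then $\eta$ is a nowhere vanishing form on $Y$. Ruling this out — equivalently, forcing $Y$ to be of general type — is where the structural results of Popa--Schnell on the Albanese map of a variety with a nowhere vanishing form enter, through the bound $\kappa(Y)\le\dim Y-1$ combined with the canonical bundle formula for $f$.

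With $f$ isotrivial and carrying the globally defined $\omega$ whose restriction to each fibre is a nonzero multiple of the invariant form on $E$, the monodromy of the family must preserve $H^0(E,\Omega^1_E)$. As the only automorphisms of $E$ fixing its space of holomorphic forms are translations, the structure group of $f$ reduces to $E$ acting by translation, so over $Y^\circ$ the fibration is a torsor under the constant group scheme $E\times Y^\circ$. Such a torsor is split by a finite base change $Y'\to Y$ that is \'etale in codimension one, and pulling $f$ back along it yields a product $E\times Y'$. Additivity of the Kodaira dimension then gives $\kappa(Y')=\kappa(E\times Y')=\kappa(X)=\dim X-1=\dim Y'$, so a minimal model $B$ of $Y'$ is of general type. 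Transporting this cover back through the birational modifications to $X^{min}$ produces the finite quasi-\'etale cover $X'\to X^{min}$, and running the minimal model program on the general-type factor identifies the $\Q$-factorialization $X''$ with $B\times E$.

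The main obstacle, and the technical heart of the argument, is twofold. First, establishing isotriviality together with the exclusion of the descent case rests on the fine structure of varieties with nowhere vanishing one-forms — the generic vanishing and Albanese techniques behind Popa--Schnell — and on controlling the Hodge bundle of the elliptic fibration; this is where the hypothesis $\kappa(X)=\dim X-1$ is used most essentially. Second, all of this must survive the passage to the minimal model: holomorphic one-forms and the nowhere vanishing condition have to be tracked through flips, divisorial contractions and $\Q$-factorializations using the extension theory for reflexive differentials on terminal singularities, and the presence of multiple fibres of the elliptic fibration is precisely what degrades the trivialising cover from \'etale to merely quasi-\'etale. Keeping the cover ramified only in codimension $\ge 2$, and checking that the product structure descends correctly to $X^{min}$ and its $\Q$-factorialization, is the most delicate bookkeeping in the proof.
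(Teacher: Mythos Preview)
Your route differs from Hao's, as sketched in the paper. Hao does not argue via trivialisation of the Hodge bundle; instead, after excluding (via Popa--Schnell, Thm.~2.1) the possibility that $\omega$ comes from the base, he observes that the general fibre of the Iitaka fibration $f\colon X^{min}\to Y$ is therefore not contracted by the Albanese morphism and so maps to a translate of a fixed elliptic curve $E\subset\mathrm{Alb}(X)$. Because $\mathrm{Alb}(X)$ is projective, the inclusion $E\hookrightarrow\mathrm{Alb}(X)$ can be dualised to a surjection $\mathrm{Alb}(X)\to E$, and the composition $\varphi\colon X^{min}\to\mathrm{Alb}(X)\to E$ then produces $B$ directly as a general fibre of $\varphi$; the product structure is read off from this explicit map. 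Your alternative --- trivialise the Hodge bundle with $\bar\omega$, deduce isotriviality and trivial monodromy, and split the resulting $E$-torsor by finite base change --- is a workable strategy and is in fact closer in spirit to the K\"ahler argument of the present paper, but it trades the concrete morphism $\varphi$ for torsor bookkeeping and leaves the construction of $B$ less explicit.

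Two points in your sketch need tightening. First, in the descent case $\omega=f^*\eta$, the claim that $f$ is forced to be a smooth submersion is not correct as stated (when $df_x$ drops rank, $f^*\eta$ need not vanish at $x$), and it is also unnecessary: from $\omega$ nowhere vanishing one gets immediately that $\eta$ is nowhere vanishing on $Y$, and then Popa--Schnell applied to the general-type Iitaka base yields the contradiction. Second, your assertion that the $E$-torsor is split by a finite quasi-\'etale base change needs the torsor class to be torsion; you should say why. In the projective setting this holds because the elliptic fibration admits a multisection (iterated hyperplane sections suffice), and it is exactly this step --- together with the dualisation $\mathrm{Alb}(X)\to E$ --- that fails in the K\"ahler case and motivates the deformation-theoretic approach of the present paper.
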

Recent work of Church \cite{church_gmm} and of Hao, Wang and Zhang \cite{Hao_Wang_Zhang_gmm} generalized Hao's result to a birational classification of smooth projective varieties with $g$ everywhere linearly independent holomorphic 1-forms assuming the abundance conjecture without any restrictions of the Kodaira dimension of $X$.\par
The main result of our paper is the following generalization of \Cref{Hao_Kodaira_codim_1} to the K\"ahler case. For a more precise version, see \Cref{precise_main_result} down below.
\begin{theorem}\label{main_result}
    Let $X$ be a compact K\"ahler manifold of Kodaira dimension $\kappa(X)=\dim X-1$. If $X$ has a nowhere vanishing holomorphic 1-form, then there exist a finite \'etale cover $X'\rightarrow X$ such that $X'$ is bimeromorphic to an elliptic fiber bundle $Z\rightarrow B$ with trivial monodromy over a smooth projective base $B$ of general type.
\end{theorem}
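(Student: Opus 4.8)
The plan is to reduce the statement to a structural analysis of the Iitaka fibration of $X$ and then to exploit the nowhere vanishing one-form to rigidify this fibration into a fiber bundle. First I would set up a good model for the Iitaka fibration. Since $\kappa(X)=\dim X-1$, the general fiber of the Iitaka fibration has dimension one and Kodaira dimension zero, hence is an elliptic curve, while the base has dimension $\dim X-1$ and is of general type. Working in the Kähler category, I would replace $X$ by a suitable bimeromorphic model on which the Iitaka fibration becomes a proper morphism $f\colon X\to Y$ with $Y$ smooth projective of general type and general fiber an elliptic curve. Producing such an $f$ is where the Kähler analogue of abundance and of the minimal model program enters, replacing the projective tools used by Hao in \Cref{Hao_Kodaira_codim_1}; in codimension one the relative situation has one-dimensional fibers, which I would use to make this input tractable.

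Next I would use the one-form to constrain the fibration. On a compact Kähler manifold every holomorphic one-form is $d$-closed, so $\omega$ defines a regular codimension-one holomorphic foliation $\mathcal{F}=\ker\omega$ and is pulled back from the Albanese torus. Restricting $\omega$ to a general fiber $F$ gives a holomorphic one-form on the elliptic curve $F$, which is either identically zero or nowhere vanishing. If $\omega$ vanished on the general fiber, then $\omega$ would descend to a holomorphic one-form on $Y$ that is nowhere vanishing over a Zariski open set; after controlling the zero locus of the descended form and the degenerate fibers of $f$, this contradicts \cite{Popa_Schnell_Kodaira_dimension_zeros_1-forms}, since a positive-dimensional projective variety of general type admits no nowhere vanishing one-form. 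Hence $\omega$ restricts to a nowhere vanishing one-form on the general fiber.

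The nonzero vertical part of $\omega$ then yields a section of the relative dualizing sheaf $\omega_{X/Y}$ whose restriction to the general fiber is nonzero, hence a nonzero section of the Hodge line bundle $f_*\omega_{X/Y}$ on $Y$. Combining this with the semipositivity of the Hodge bundle of an elliptic fibration and Kodaira's theory, I would argue that $f$ is forced to be isotrivial with constant $j$-invariant. The monodromy of an isotrivial elliptic fibration acts on the first homology of the fiber through the finite automorphism group of a fixed elliptic curve, so a finite étale cover $B\to Y$ kills the monodromy; after this base change the degenerate fibers become smooth and $f$ becomes a smooth elliptic fiber bundle with trivial monodromy over the smooth projective general type base $B$. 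Transferring the cover $B\to Y$ to a finite étale cover $X'\to X$ via the induced subgroup of $\pi_1(X)$, I obtain that $X'$ is bimeromorphic to an elliptic fiber bundle $Z\to B$ of the required form, matching \Cref{precise_main_result}.

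The main obstacle is twofold. First, realizing the Iitaka fibration as a genuine morphism onto a projective general type base in the Kähler category requires Kähler MMP and abundance-type results that are not available in the generality of the projective case; I expect this to be the most delicate input and would address it by exploiting the one-dimensionality of the fibers in Kodaira codimension one. Second, I must upgrade the monodromy-killing base change, which a priori lives over the smooth locus of $f$, to an honest finite étale cover of $X$ itself while keeping track of $\omega$ and extending the bundle structure across the originally singular fibers; here the triviality of the Hodge bundle, which rules out unstable fibers after the base change, is the key tool that makes the fiber bundle structure global.
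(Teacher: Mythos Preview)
Your approach diverges from the paper's, and the central step---deducing isotriviality of the elliptic fibration from a nonzero section of the Hodge bundle---does not work as written. The surjection $\Omega^1_X\to\Omega^1_{X/Y}$ sends $\omega$ to a section of $\omega_{X/Y}$ that may vanish wherever $\omega$ happens to lie in the subsheaf $f^*\Omega^1_Y\subset\Omega^1_X$; the nowhere-vanishing of $\omega$ in $\Omega^1_X$ does not prevent this. So you only obtain that $f_*\omega_{X/Y}$ is effective, and a nef effective line bundle need not be trivial: non-isotrivial elliptic fibrations routinely have Hodge bundles with nonzero sections. In Hao's projective argument isotriviality comes from a different mechanism---the Albanese sends the general fiber to a translate of a fixed $E\subset\mathrm{Alb}(X)$, and one then dualizes the inclusion to obtain a morphism $X^{min}\to E$---and the paper explicitly identifies this dualization as the step that fails when $\mathrm{Alb}(X)$ is not projective. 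Your sketch offers no replacement for it. The subsequent ``monodromy-killing'' step is also optimistic: a finite cover trivializing the monodromy over $Y^*$ need not extend to an \'etale cover of $Y$, and you have not shown that the singular fibers disappear after base change.

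The paper's actual proof avoids arguing directly on the K\"ahler $X$. It uses Lin's algebraic approximation via tautological families (\Cref{tautological_families}, \Cref{theorem_approximation}) to place $X$ in a family $\pi\colon\mathcal{X}\to V'$ with dense projective fibers, proves that the nowhere-vanishing one-form persists on nearby fibers (\Cref{deformations_no_zeros}), and then applies Hao's projective theorem to a projective fiber $\mathcal{X}_a$. Because all members of the tautological family are locally isomorphic over the base and share the same minimal Weierstra\ss\ model over a fixed projective $Y_2$, the isotriviality obtained for $\mathcal{X}_a$ forces $W(\mathcal{L},\alpha,\beta)\cong Y_2\times E$, whence $\mathcal{W}^\#=\mathcal{W}^{mer}=\OO_{Y_2}/\Lambda$ and the original fibration is already a twisted product $(Y_2\times E)^{\eta_G}$; a careful analysis of the $G$-action then produces the \'etale cover. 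As a minor point, your first step does not require K\"ahler MMP or abundance: the paper handles it by a dichotomy on the algebraic dimension, using algebraic reduction when $a(X)=\dim X-1$ and the projective MMP when $a(X)=\dim X$ (since then $X$ is Moishezon and K\"ahler, hence projective).
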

\begin{remark}
    In contrast to the projective situation, one cannot expect that the finite \'etale covering space $X'$ of $X$ in \Cref{main_result} is bimeromorphic to a product $Z\cong B\times E$, see \Cref{example_no_splitting} below.
\end{remark}
\subsection{Outline of the argument}
In the projective case, Hao argues as follows: Let $X$ be a smooth projective variety over $\C$ of Kodaira codimension 1 that admits a holomorphic 1-form $\omega\in H^0(X,\Omega^1_X)$ without zeros. Let $X^{min}$ be a good minimal model of $X$, and consider the Iitaka fibration $f\colon X^{min}\rightarrow Y$. By \cite[][Thm. 2.1]{Popa_Schnell_Kodaira_dimension_zeros_1-forms}, the holomorphic 1-form $\omega$ cannot come from $Y$. In particular, the general fiber of $f$ is not contracted by the Albanese morphism. Therefore, it is mapped to a translate of a fixed elliptic curve $E\subset Alb(X)$. As $Alb(X)$ is projective, we can dualize the inclusion $E\subset Alb(X)$ to get a morphism
\begin{equation*}
    \varphi\colon X^{min}\longrightarrow Alb(X)\longrightarrow E.
\end{equation*}
The variety $B$ in \Cref{Hao_Kodaira_codim_1}, as well as the splitting $X''\cong B\times E$ of a $\Q$-factorialization $X''\rightarrow X'$ of a finite quasi-\'etale cover $X'\rightarrow X^{min}$ are then constructed from a general fiber of $\varphi\colon X^{min}\rightarrow E$.\par
Now, let $X$ be a compact connected K\"ahler manifold of Kodaira codimension 1 that admits a holomorphic 1-form $\omega\in H^0(X,\Omega^1_X)$ without zeros. Note that we cannot apply the result by Popa-Schnell \cite[][Thm. 2.1]{Popa_Schnell_Kodaira_dimension_zeros_1-forms}. Moreover, $Alb(X)$ is not projective, and therefore, the morphism $\varphi$ does not exists in general.\par
To overcome these obstacles, we invoke Lin's construction of an algebraic approximation via tautological models and families \cite{Lin_algebraic_approximation_codim_1} to obtain the following diagram
\[\begin{tikzcd}
	{\mathcal{X}} && {\mathcal{T}/G} \\
	& V && {Y/G.}
	\arrow["\sim", dashed, from=1-1, to=1-3]
	\arrow["\pi"', from=1-1, to=2-2]
	\arrow["{\Pi/G}", from=1-3, to=2-2]
	\arrow["p/G"', from=1-3, to=2-4]
\end{tikzcd}\]
Here, $\pi\colon\mathcal{X}\rightarrow V$ is a deformation of $X\cong\mathcal{X}_0:=\pi^{-1}(0)$ over a complex vector space $V$, and $\mathcal{T}$ is a complex analytic variety on which a finite group $G$ acts. The morphism
\begin{equation*}
    \Pi\colon\mathcal{T}\longrightarrow Y\times V\longrightarrow V
\end{equation*}
is an explicitly constructed $G$-equivariant family of elliptic fibrations $p_v\colon\Pi^{-1}(v)\rightarrow Y$ over a fixed smooth projective base $Y$, called the tautological family. The central elliptic fibration
\begin{equation*}
    p_0/G\colon\Pi^{-1}(0)/G\longrightarrow Y/G
\end{equation*}
is bimeromorphic to the Iitaka fibration of $X$. Moreover, the deformation $\pi\colon\mathcal{X}\rightarrow V$ has the property that the set of points $v\in V$ such that $\mathcal{X}_v:=\pi^{-1}(v)$ is a projective manifold, is dense in the euclidean topology. The two families $\pi\colon\mathcal{X}\rightarrow V$ and $\Pi/G\colon\mathcal{T}/G\rightarrow V$ are bimeromorphic over $V$. The elliptic fibrations $p_v\colon\Pi^{-1}(v)\rightarrow Y$ are locally isomorphic over $Y$, i.e. there is an open cover $\{U_i\}_{i\in I}$ such that for every $U_i$ the isomorphism class of the elliptic fibrations
\begin{equation*}
    p_v\colon\Pi^{-1}(v)\vert_{U_i}\longrightarrow U_i
\end{equation*}
does not depend on $v\in V$.\par
We then show that a small deformation of $X$ also admits a holomorphic 1-form without zeros. In particular, there is a point $v_0\in V$ such that $\mathcal{X}_{v_0}$ is a projective manifold that admits a holomorphic 1-form without zeros. As the Iitaka fibration of $\mathcal{X}_{v_0}$ is birational to $p_{v_0}/G\colon\Pi^{-1}(v_0)/G\rightarrow Y/G$, and the elliptic fibrations $p_v\colon\Pi^{-1}(v)\rightarrow Y$ are locally isomorphic over $Y$, we conclude that the general fiber of the Iitaka fibration of any $\mathcal{X}_v$ is isomorphic to a fixed elliptic curve. This overcomes the first obstacle.\par
To finish the argument, we show that the elliptic fibration $p_0\colon\Pi^{-1}(0)\rightarrow Y$ is locally trivial, i.e. locally isomorphic to a product. It remains to carefully describe the $G$-action on this fibration. Taking a carefully chosen subquotient, yields the desired elliptic fiber bundle $Z\rightarrow B$.
\subsection{Conventions and notation}
By a complex analytic variety, we mean a Hausdorff, second-countable, irreducible, and reduced complex space. A complex analytic variety is called K\"ahler if it admits a K\"ahler metric in the sense of \cite[][II. 1]{Varouchas_Kaehler_spaces_proper_open_morphisms}. A compact complex analytic K\"ahler variety $X$ is called minimal if it is $\Q$-factorial, has terminal singularities, and the canonical divisor $K_X$ is nef. It is furthermore called a good minimal model, if a positive multiple of $K_X$ is globally generated
\subsection*{Acknowledgments}
I would like to thank Stefan Schreieder for suggesting this problem to me, for very helpful discussions, and for comments on this manuscript. The research was conducted in the framework of the DFG-funded research training group RTG 2965: From Geometry to Numbers.
\section{Preliminaries}
\subsection{Singular K\"ahler spaces}
A complex analytic variety is called K\"ahler if it admits a K\"ahler metric in the sense of \cite[][II. 1]{Varouchas_Kaehler_spaces_proper_open_morphisms}. We recall some standard results about compact K\"ahler varieties.
\begin{proposition}\cite[][II. Prop. 1.3.1, Cor. 3.2.2]{Varouchas_Kaehler_spaces_proper_open_morphisms}, \cite[][Prop. 3.5]{Graf_Kirschner_finite_quotients_3-tori}
    Let $f\colon X\rightarrow Y$ be a morphism between compact complex analytic varieties. Then the following holds true.
    \begin{enumerate}[label=(\roman*)]
        \item If $Y$ is K\"ahler and $f$ is a closed embedding, then $X$ is K\"ahler.
        \item If $Y$ is K\"ahler and $f$ is a projective morphism, then $X$ is K\"ahler. In particular, by Hironaka's result on resolution of singularities, compact complex analytic K\"ahler varieties admit desingularizations by compact K\"ahler manifolds.
        \item If $f$ is a finite surjective morphism, then $X$ is K\"ahler if and only if $Y$ is K\"ahler.
    \end{enumerate}
\end{proposition}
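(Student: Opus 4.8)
The plan is to handle the three statements separately, working throughout with the definition of a Kähler metric in the sense of Varouchas: a collection of local strictly plurisubharmonic potentials that glue to a global closed positive $(1,1)$-current. All three parts reduce to understanding how such potentials behave under the relevant class of morphisms.

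For (i) I would simply restrict. If $\{\varphi_\alpha\}$ are local potentials for a Kähler metric on $Y$, then along the closed embedding $f$ the pullbacks $f^*\varphi_\alpha$ are again strictly plurisubharmonic and glue to a Kähler metric on $X$. The only thing to verify is that strict plurisubharmonicity is preserved under restriction to a (possibly singular) closed analytic subvariety, which is immediate from the definition since plurisubharmonicity is tested on local holomorphic maps from discs, all of which factor through the subvariety.

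For (ii) I would factor the projective morphism as $X \hookrightarrow \mathbb{P}^N \times Y \to Y$, where the first map is a closed embedding and the second is the projection. The product $\mathbb{P}^N \times Y$ is Kähler, carrying the sum of the pullback of the Fubini--Study metric and the pullback of a Kähler metric on $Y$; hence $X$ is Kähler by part (i). The ``in particular'' assertion follows by applying this to a resolution of singularities $\tilde{X} \to X$, which is a projective morphism, so that the smooth Kähler space $\tilde{X}$ is an honest Kähler manifold.

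For (iii) the implication ``$Y$ Kähler $\Rightarrow$ $X$ Kähler'' is immediate from (ii), since a finite surjective morphism is projective. The reverse implication is the substantive one, and I expect it to be the main obstacle. Starting from a Kähler form $\omega$ on $X$ with local potentials, I would push $\omega$ forward along $f$ by summing its potentials over the sheets of the finite map, i.e.\ by taking the fibrewise trace, to obtain a closed positive current on $Y$. The difficulty is controlling this pushed-forward potential across the branch locus of $f$, where the number of sheets drops and naive summation need not remain strictly plurisubharmonic. I would address this by passing to the normalization and invoking that the trace of a strictly plurisubharmonic function under a finite map stays plurisubharmonic, supplemented by a local estimate near the branch locus to retain strict positivity; this delicate descent is precisely the content of the cited result of Graf--Kirschner.
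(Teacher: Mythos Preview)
The paper does not supply its own proof of this proposition; it is stated as a preliminary fact with references to Varouchas and to Graf--Kirschner, and no \texttt{proof} environment follows. So there is nothing in the paper to compare your argument against directly.

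That said, your sketch is essentially the standard argument one finds in those sources, and it is correct in outline. Two small remarks. In part~(ii), a projective morphism need not factor through a \emph{trivial} projective bundle $\mathbb{P}^N\times Y$; in general one only gets a closed embedding $X\hookrightarrow\mathbb{P}(\mathcal{E})$ for some coherent sheaf $\mathcal{E}$ on $Y$. This does not affect the argument, since $\mathbb{P}(\mathcal{E})$ over a K\"ahler base is again K\"ahler (use the tautological $\mathcal{O}(1)$ together with the pullback of a K\"ahler form on $Y$), and then part~(i) applies as you say. In part~(iii), your identification of the hard direction is exactly right: the descent of a K\"ahler metric along a finite surjection via fibrewise trace is the content of the Graf--Kirschner reference, and the subtlety is precisely the control of strict plurisubharmonicity across the branch locus, which you correctly flag rather than sweep under the rug.
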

\subsection{Elliptic fibrations}
If not stated otherwise, the following material can be found in \cite[][1.1]{Nakayama_elliptic-fibrations}.
\begin{definition}
    Let $f\colon X\rightarrow Y$ be a proper surjective morphism between normal complex analytic varieties.
    \begin{enumerate}[label=(\roman*)]
        \item The morphism $f$ is called an elliptic fibration if it has connected fibers and the general fiber is a smooth curve of genus 1.
        \item The discriminant locus of an elliptic fibration is the not necessarily reduced closed complex subspace $\Delta\subset Y$ over which $f$ is not smooth. The smooth locus is denoted by $Y^*:=Y\setminus\Delta$.
        \item The elliptic fibration $f\colon X\rightarrow Y$ is called projective, if there exists an $f$-ample line bundle on $X$. Similarly, $f$ is called locally projective, if there exists an open cover $Y=\bigcup U_i$ such that the elliptic fibrations $f^{-1}(U_i)\rightarrow U_i$ are projective for each $U_i$.
    \end{enumerate}
\end{definition}
In the following, we will only consider elliptic fibrations $f\colon X\rightarrow Y$ over a connected complex manifold $Y$.
\subsubsection{Jacobian fibrations}
\begin{definition}\label{def_variation_HS_Jacobian_fibration}
    Let $f\colon X\rightarrow Y$ be a smooth elliptic fibration.
    \begin{enumerate}[label=(\roman*)]
        \item The variation of Hodge structures $H$ associated to $f$ consists of the following data: the local system $H_\Z:=R^{1}f_*\Z$ on $Y$ of free abelian groups of rank $2$, the associated vector bundle $\mathcal{H}:=H_\Z\otimes\OO_Y$ on $Y$, the holomorphic subbundle $\mathcal{F}^{1}:=f_*\Omega^1_{X/Y}$, and the quotient $\mathcal{H}^{0,1}:=\mathcal{H}/\mathcal{F}^{1}$.
        \item We say that $f$ has trivial monodromy if the local system $H_\Z$ is isomorphic to $\Z^{2}$.
        \item We define the Jacobian fibration associated to $f$ as the complex manifold
        \begin{equation*}
            J_H:=\mathcal{H}^{0,1}/H_\Z
        \end{equation*}
        obtained by taking the quotient of the vector bundle $\mathcal{H}^{0,1}\rightarrow Y$ by the $H_\Z$-action together with the natural morphism $\pi\colon J_H\rightarrow Y$.
        \item The zero section of $\mathcal{H}^{0,1}\rightarrow Y$ induces the global section $\sigma_0\colon Y\rightarrow J_H$. The sheaf of local sections of $\pi\colon J_H\rightarrow Y$ is a sheaf of abelian groups with addition of local sections and neutral element $\sigma_0$. It is denoted by $\mathcal{J}_H=\mathcal{H}^{0,1}/H_\Z$, where we identify the vector bundle $\mathcal{H}^{0,1}$ with its sheaf of local sections.
        \item Let $\eta\in H^1(Y,\mathcal{J}_H)$ be a cohomology class. The $\mathcal{J}_H$-torsor associated to $\eta$ is denoted by
        \begin{equation*}
            \pi^\eta\colon J^\eta_H\rightarrow Y.
        \end{equation*}
        \item We denote by $\mathrm{exp}\colon H^1(Y,\mathcal{H}^{0,1})\rightarrow H^1(Y,\mathcal{J}_H)$ and $c\colon H^1(Y,\mathcal{J}_H)\rightarrow H^2(Y,H_\Z)$ the maps induced by the short exact sequence $0\rightarrow H_\Z\rightarrow\mathcal{H}^{0,1}\rightarrow\mathcal{J}_H\rightarrow 0$, and call them the exponential, and Chern class map, respectively.
    \end{enumerate}
\end{definition}
\begin{corollary}\label{corollary_isotrivial_and_trivial_vhs}
    Let $f\colon X\rightarrow Y$ be a smooth elliptic fibration. Suppose that $f$ is isotrivial with fiber $E\cong\C/\Lambda$ and that its monodromy is trivial. Then $J_H=Y\times E$ and $\mathcal{J}_H\cong\OO_Y/\Lambda$.
\end{corollary}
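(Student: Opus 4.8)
The plan is to deduce both statements from the single fact that, under these hypotheses, the variation of Hodge structures $H$ is \emph{constant}; once that is known, both assertions follow by unwinding the definitions fibrewise. First I would use the trivial monodromy to fix an isomorphism $H_\Z\cong\Z^2$, i.e. a global flat frame $\gamma_1,\gamma_2$. This trivializes the associated bundle $\mathcal{H}\cong\OO_Y^{\oplus 2}$, with the constant subsheaf $\Z^2\subset\OO_Y^{\oplus 2}$ identified with $H_\Z$. The Hodge filtration is then a holomorphic line subbundle $\mathcal{F}^1\subset\OO_Y^{\oplus 2}$, equivalently a holomorphic period map $\mathcal{P}\colon Y\to\mathfrak{H}$ to the upper half-plane; here the Riemann bilinear relations coming from the polarization on $H^1$ of the fibres pin down the correct half-plane, and the image is connected since $Y$ is.

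The heart of the matter is to show that $\mathcal{P}$ is constant, and this is where I would spend the effort. Composing $\mathcal{P}$ with the modular $j$-function gives the holomorphic function $y\mapsto j(X_y)$, where $X_y:=f^{-1}(y)$. By isotriviality every fibre is isomorphic to $E$, so this function is the constant $j(E)$. Hence $\mathcal{P}$ takes values in the fibre $j^{-1}(j(E))$, which is a single $\mathrm{SL}_2(\Z)$-orbit and therefore discrete in $\mathfrak{H}$. As $Y$ is connected and $\mathcal{P}$ is continuous, $\mathcal{P}$ is constant, say $\mathcal{P}\equiv\tau_0$. Consequently $\mathcal{F}^1=\OO_Y\cdot v_0\subset\OO_Y^{\oplus 2}$ is a constant line subbundle (with $v_0$ the period vector determined by $\tau_0$), and the whole triple $(H_\Z,\mathcal{H},\mathcal{F}^1)$ is the constant variation attached to the weight-one Hodge structure $H^1(E)$.

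It then remains to pass to the quotient. The projection $\OO_Y^{\oplus 2}\to\OO_Y^{\oplus 2}/\OO_Y v_0\cong\OO_Y$ identifies $\mathcal{H}^{0,1}=\mathcal{H}/\mathcal{F}^1$ with the trivial line bundle $\OO_Y\cong Y\times\C$, and carries the constant lattice $H_\Z=\Z^2$ to a constant rank-two lattice in $\C$. By the canonical identification $J(E)=H^1(E,\OO_E)/H^1(E,\Z)\cong E$, this lattice is exactly $\Lambda\subset\C$. Taking the quotient over $Y$ gives $J_H=\mathcal{H}^{0,1}/H_\Z\cong(Y\times\C)/\Lambda=Y\times(\C/\Lambda)=Y\times E$, and on the level of sheaves of local sections $\mathcal{J}_H=\mathcal{H}^{0,1}/H_\Z\cong\OO_Y/\Lambda$, where $\Lambda$ is viewed as a constant subsheaf.

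The main obstacle is the second step: the substance of the corollary is the rigidity assertion that isotriviality \emph{together with} trivial monodromy forces the entire period map to be constant, not merely its $j$-invariant. The clean route around it is precisely the observation that trivial monodromy lets the period map lift to $\mathfrak{H}$, isotriviality confines its image to a discrete $j$-fibre, and connectedness of $Y$ closes the argument; everything else is formal bookkeeping with the chosen trivializations.
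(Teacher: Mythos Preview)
Your argument is correct. The paper states this corollary without proof, evidently regarding it as an immediate consequence of \Cref{def_variation_HS_Jacobian_fibration}; your write-up supplies precisely the standard details the paper suppresses, namely that trivial monodromy lifts the period map to $\mathfrak{H}$, isotriviality forces its image into a discrete $j$-fibre, and connectedness of $Y$ then makes the variation constant.
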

\begin{definition}\label{definition_isotrivial_and_trivial_vhs}
    In the situation of \Cref{corollary_isotrivial_and_trivial_vhs}, we denote the $\mathcal{J}_H$ torsor associated to $\eta\in H^1(Y,\mathcal{J}_Y)$ by $\pi^\eta\colon(Y\times E)^\eta\rightarrow Y$.
\end{definition}
\begin{proposition}\label{fundamental_properties_special_torus_bundles}\cite[][Thm. 3.20, Prop. 3.23]{Claudon_Hoering_Lin_fundamental_group}\cite[][Prop. 1.3.3]{Nakayama_elliptic-fibrations}
    Let $Y$ be a compact connected K\"ahler manifold, let $E=\C/\Lambda$ be an elliptic curve, and let $\eta\in H^1(Y,\OO_Y/\Lambda)$.
    \begin{enumerate}[label=(\roman*)]
        \item\label{torus_bundle_kaehler} The complex manifold $(Y\times E)^\eta$ is K\"ahler if and only if $c(\eta)\in H^2(Y,\Lambda)$ is torsion.
        \item\label{1-forms_on_torus_bundles} Suppose that $(Y\times E)^\eta$ is K\"ahler. Let $\pi\colon (Y\times E)^\eta\rightarrow Y$ be the projection. Then for every $y\in Y$ the restriction map
        \begin{equation*}
            H^0((Y\times E)^\eta,\Omega^1_{(Y\times E)^\eta})\longrightarrow H^0(\pi^{-1}(y),\Omega^1_{\pi^{-1}(y)})\cong H^0(E,\Omega^1_E)
        \end{equation*}
        fits into a short exact sequence
        \begin{equation*}
            0\longrightarrow H^0(Y,\Omega^1_Y)\longrightarrow H^0((Y\times E)^\eta,\Omega^1_{(Y\times E)^\eta})\longrightarrow H^0(E,\Omega^1_E)\longrightarrow 0.
        \end{equation*}
        In particular, any holomorphic 1-form $\omega\in H^0((Y\times E)^\eta,\Omega^1_{(Y\times E)^\eta})$ that does not come from $Y$ has no zeros.
        \item\label{torus_bundle_multisection} The following is equivalent:
        \begin{enumerate}[label=(\alph*)]
            \item\label{torus_bundle_multisection_a} The smooth elliptic fibration $(Y\times E)^\eta\rightarrow Y$ admits a multisection.
            \item\label{torus_bundle_multisection_b} There is a finite \'etale cover $Y'\rightarrow Y$ such that $(Y\times E)^\eta\times_YY'\cong Y'\times E$.
            \item\label{torus_bundle_multisection_c} The cohomology class $\eta\in H^1(Y,\OO_Y/\Lambda)$ is torsion.
        \end{enumerate}
    \end{enumerate}
\end{proposition}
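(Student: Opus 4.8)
The plan is to establish the three statements separately, using throughout the short exact sequence
\[
0\longrightarrow\Lambda\longrightarrow\OO_Y\longrightarrow\OO_Y/\Lambda\longrightarrow 0
\]
furnished by \Cref{corollary_isotrivial_and_trivial_vhs}, together with the maps $\mathrm{exp}$ and $c$ from \Cref{def_variation_HS_Jacobian_fibration}. Write $X:=(Y\times E)^\eta$ and let $\pi\colon X\to Y$ be the projection; this is a smooth elliptic fiber bundle, a torsor under $\mathcal{J}_H\cong\OO_Y/\Lambda$, whose topological Chern class is $c(\eta)\in H^2(Y,\Lambda)$, identified with the image of the tautological generator of $H^0(Y,R^1\pi_*\Z)$ under the Leray transgression $d_2$. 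For part (i) I would argue both implications cohomologically. If $X$ is K\"ahler, then $\pi$ is a smooth proper K\"ahler morphism, so Deligne's theorem yields degeneration of the Leray spectral sequence with $\Q$-coefficients at $E_2$; hence $d_2$ vanishes rationally and $c(\eta)$ is torsion. Conversely, if $c(\eta)$ is torsion then $c(\eta)_\R=0$, which is exactly the obstruction to gluing the fiberwise area forms $\tfrac{i}{2}\,dz\wedge d\bar z$ on the local charts $U_i\times E$ into a global closed real $(1,1)$-form $\theta$ on $X$ that is positive on every fiber. Choosing a K\"ahler form $\omega_Y$ on $Y$, the form $\theta+N\pi^*\omega_Y$ is closed and positive-definite for $N\gg 0$ (a Blanchard-type argument), so $X$ is K\"ahler.

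For part (ii) I would take global sections in the relative cotangent sequence
\[
0\longrightarrow\pi^*\Omega^1_Y\longrightarrow\Omega^1_X\longrightarrow\Omega^1_{X/Y}\longrightarrow 0,
\]
which is exact with $\Omega^1_{X/Y}$ a line bundle since $\pi$ is a submersion. As the family is isotrivial with trivial monodromy, the Hodge bundle is trivial, $\mathcal{F}^1=\pi_*\Omega^1_{X/Y}\cong\OO_Y$, whence $\Omega^1_{X/Y}\cong\pi^*\mathcal{F}^1\cong\OO_X$. Using $\pi_*\OO_X=\OO_Y$ and the projection formula, the left term gives $H^0(Y,\Omega^1_Y)$ and the right term gives $H^0(X,\OO_X)=\C\cong H^0(E,\Omega^1_E)$, so only surjectivity of the restriction is at stake. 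This I would obtain from a dimension count: since $X$ is K\"ahler by hypothesis, the Leray sequence degenerates rationally, so in degree one $0\to H^1(Y,\Q)\to H^1(X,\Q)\to H^0(Y,R^1\pi_*\Q)\to 0$ with $H^0(Y,R^1\pi_*\Q)=\Q^2$ by trivial monodromy; hence $b_1(X)=b_1(Y)+2$ and $h^{1,0}(X)=h^{1,0}(Y)+1$ by Hodge symmetry on the compact K\"ahler manifold $X$. Exactness then forces the restriction onto the one-dimensional space $H^0(E,\Omega^1_E)$ to be surjective. Finally, a form $\omega$ not coming from $Y$ maps to a nonzero, hence nowhere-vanishing, section of $\Omega^1_{X/Y}\cong\OO_X$; were $\omega$ to vanish at a point, so would its image in the quotient, so $\omega$ has no zeros.

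For part (iii) I would prove the cycle (c)$\Rightarrow$(b)$\Rightarrow$(a)$\Rightarrow$(c). If $\eta$ has order $n$, then since $\OO_Y$ is divisible the multiplication-by-$n$ sequence $0\to\tfrac1n\Lambda/\Lambda\to\OO_Y/\Lambda\xrightarrow{\,n\,}\OO_Y/\Lambda\to 0$ exhibits $\eta$ as the image of a class $\bar\eta\in H^1(Y,(\Z/n\Z)^2)=\Hom(\pi_1(Y),(\Z/n\Z)^2)$; the finite \'etale cover $Y'\to Y$ attached to $\bar\eta$ kills it, so $\eta|_{Y'}=0$ and $X\times_YY'\cong Y'\times E$, giving (b). A section of $Y'\times E\to Y'$ pushed forward along the finite cover $Y'\to Y$ yields a multisection of $X\to Y$, giving (a). For (a)$\Rightarrow$(c), a degree-$m$ multisection $D\to Y$ carries a tautological section of the pulled-back torsor, so $\eta|_D=0$; applying the trace map $\mathrm{tr}\colon(\pi_D)_*(\OO_D/\Lambda)\to\OO_Y/\Lambda$ of the finite, generically \'etale cover $D\to Y$ gives $m\eta=\mathrm{tr}(\eta|_D)=0$, so $\eta$ is torsion.

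The main obstacle is the pair of ``hard directions''. In part (i) one must both run the Deligne degeneration (for K\"ahler $\Rightarrow$ torsion) and carry out the Blanchard gluing of the fiberwise forms (for torsion $\Rightarrow$ K\"ahler). Above all, the implication (a)$\Rightarrow$(c) in part (iii) is delicate: the multisection $D$ need not be smooth and the cover $D\to Y$ may ramify, so one must first restrict to the smooth locus $Y^*$, normalize $D$, and verify that the trace on $\OO/\Lambda$ is well defined and computes to multiplication by $\deg(D/Y)$. This is the step I would expect to absorb most of the work.
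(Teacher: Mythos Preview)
The paper does not prove this proposition; it is stated with references to Claudon--H\"oring--Lin and Nakayama and used as a black box. There is therefore no in-paper argument to compare against, and your sketch is a reasonable reconstruction of the standard proofs in those references.

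Your arguments for (i) and (ii) are correct and follow the expected lines: Deligne/Blanchard degeneration for the forward direction of (i), the Blanchard gluing for the converse, and the relative cotangent sequence plus a Betti-number count for (ii). One small point in (i): the identification of $c(\eta)$ with the Leray $d_2$-transgression deserves a sentence, since $R^1\pi_*\Z$ is naturally $\Lambda^\vee$ rather than $\Lambda$; the match comes from comparing the exponential sequence with the Leray filtration on $H^2(X,\Z)$.

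Your concern about (a)$\Rightarrow$(c) in (iii) is overstated, and the route you propose through a trace on $\OO/\Lambda$ over a possibly singular, ramified cover is unnecessarily painful. There is a cleaner global argument that sidesteps all smoothness and ramification issues. A degree-$m$ multisection $D\subset X$ gives, fiber by fiber, an effective $0$-cycle of degree $m$ on $X_y$, hence a holomorphic section $Y\to\mathrm{Sym}^m_Y(X)$. For any $E$-torsor $T$ there is a canonical addition map $\mathrm{Sym}^m(T)\to T^{[m]}$, where $T^{[m]}$ is the torsor with class $m\cdot[T]$, sending $\{t_1,\dots,t_m\}$ to $t_1+\cdots+t_m$; this sum is well-defined precisely because the target carries the class $m\cdot[T]$. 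Globalizing over $Y$ and composing produces a global section of the torsor with class $m\eta$, whence $m\eta=0$. This is essentially the argument behind Nakayama's Prop.~1.3.3 and avoids the step you flagged as the main obstacle.
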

\begin{remark}\label{example_no_splitting}
    \Cref{fundamental_properties_special_torus_bundles} allows us to construct a compact K\"ahler manifold $X$ of Kodaira dimension $\kappa(X)=\dim X-1$ that admits a holomorphic 1-form without zeros such that no finite \'etale cover of $X$ splits into a product where one factor is an elliptic curve. This shows that \Cref{Hao_Kodaira_codim_1} fails in the K\"ahler case. Indeed, let $C$ be a smooth projective curve of genus $g\geq 2$, and let $E=\C/\Lambda$ be an elliptic curve. Consider the exact sequence
    \begin{equation*}
        H^1(C,\Lambda)\longrightarrow H^1(C,\OO_C)\stackrel{\mathrm{exp}}{\longrightarrow}H^1(C,\OO_C/\Lambda)\stackrel{c}{\longrightarrow}H^2(C,\Lambda),
    \end{equation*}
    and choose a class $\alpha\in H^1(C,\OO_C)$ that is not in the image of
    \begin{equation*}
        H^1(C,\Lambda)\otimes\Q\longrightarrow H^1(C,\OO_C).
    \end{equation*}
    Since $H^1(C,\OO_C)$ is a non-trivial complex vector space, and $H^1(C,\Lambda)$ is a finitely generated abelian group, such an $\alpha$ exists. Define $\eta:=\mathrm{exp}(\alpha)$. Note that the choice of $\alpha$ implies that $\eta$ is non-torsion. The exactness of the above sequence implies that the Chern class $c(\eta)$ vanishes. Thus, by \Cref{fundamental_properties_special_torus_bundles} \cref{torus_bundle_kaehler} and \cref{1-forms_on_torus_bundles}, the space $X:=(C\times E)^\eta$ is a compact K\"ahler manifold that admits a holomorphic 1-form without zeros. Since $C$ is of genus $g\geq 2$, $\kappa(X)=\dim C=\dim X-1$. We claim that no finite \'etale cover of $X$ splits into a product where one factor is an elliptic curve. To show this, suppose that a finite \'etale cover $X'\rightarrow X$ splits into a product $X'\cong D\times F$, where $F$ is an elliptic curve. Since $g(C)\geq 2$, the composition
    \begin{equation*}
        \{x\}\times F\longhookrightarrow D\times F\longrightarrow X\longrightarrow C
    \end{equation*}
    must be constant for every $x\in D$, and hence there exists a morphism $D\rightarrow C$ such that the composition $D\times F\rightarrow X\rightarrow C$ factors through $D\times F\rightarrow D\rightarrow C$. Since $D\times F\rightarrow X\rightarrow C$ is surjective, the morphism $D\rightarrow C$ cannot be constant. Therefore, the image $S\subset X$ of $D\times\{0\}\rightarrow X$ defines a multisection of $X\rightarrow C$. Thus, by \Cref{fundamental_properties_special_torus_bundles} \cref{torus_bundle_multisection_a}, the cohomology class $\eta$ is torsion, which is a contradiction.
\end{remark}
\subsubsection{Equivariant Weierstraß models}\label{section_Weierstrass_models}
\begin{definition}\label{definition_Weierstrass_models}\cite[][Def. 1.1, 2]{Nakayama_Weierstrass}, \cite[][3E]{Claudon_Hoering_Lin_fundamental_group}
    Let $Y$ be a connected complex manifold and let $\mathcal{L}$ be a line bundle on $Y$. Choose global sections $\alpha\in H^0(Y,\mathcal{L}^{-4})$ and $\beta\in H^0(Y,\mathcal{L}^{-6})$ such that $4\alpha^3+27\beta^2\neq 0$ in $H^0(Y,\mathcal{L}^{-12})$.
    \begin{enumerate}[label=(\roman*)]
        \item The Weierstraß model $p\colon W(\mathcal{L},\alpha,\beta)\rightarrow Y$ is then defined as
        \begin{equation*}
            W(\mathcal{L},\alpha,\beta):=\{y^2z-(x^3+\alpha xz^2+\beta z^3)=0\}\subset\mathbb{P}:=\mathbb{P}(\OO_Y\oplus\mathcal{L}^2\oplus\mathcal{L}^3)\longrightarrow Y,
        \end{equation*}
        where $x\in H^0(\mathbb{P},\OO_\mathbb{P}(1)\otimes p^*\mathcal{L}^{-2})$, $y\in H^0(\mathbb{P},\OO_\mathbb{P}(1)\otimes p^*\mathcal{L}^{-3})$, $z\in H^0(\mathbb{P},\OO_\mathbb{P}(1))$ are given by the direct sum inclusions $\mathcal{L}^2\hookrightarrow\OO_Y\oplus\mathcal{L}^2\oplus\mathcal{L}^3$, $\mathcal{L}^3\hookrightarrow\OO_Y\oplus\mathcal{L}^2\oplus\mathcal{L}^3$, $\OO_Y\hookrightarrow\OO_Y\oplus\mathcal{L}^2\oplus\mathcal{L}^3$.
        \item The subspace $W(\mathcal{L},\alpha,\beta)^\#\subset W(\mathcal{L},\alpha,\beta)$ is defined as the locus where $p$ is smooth.
        \item The canonical section of $W(\mathcal{L},\alpha,\beta)\rightarrow Y$ is defined as $\Sigma:=\{x=z=0\}$. The sheaf of local meromorphic sections of $p\colon W(\mathcal{L},\alpha,\beta)\rightarrow Y$ is denoted by $\mathcal{W}(\mathcal{L},\alpha,\beta)^{mer}$ and the sheaf of local sections with image in $W(\mathcal{L},\alpha,\beta)^\#$ is denoted by $\mathcal{W}(\mathcal{L},\alpha,\beta)^\#$. Both sheaves are sheaves of abelian groups with addition of local (meromorphic) sections with respect to $\Sigma$.
        \item The Weierstraß model $p\colon W(\mathcal{L},\alpha,\beta)\rightarrow Y$ is called minimal if there is no prime divisor $\Delta$ on $Y$ such that $\mathrm{div}(\alpha)-4\Delta$ and $\mathrm{div}(\beta)-6\Delta$ are both effective divisors on $Y$.
        \item Let $G$ be a finite group. The Weierstraß model $p\colon W(\mathcal{L},\alpha,\beta)\rightarrow Y$ is called $G$-equivariant if $G$ acts on $W(\mathcal{L},\alpha,\beta)$ and on $Y$ such that $p$ is $G$-equivariant and the canonical section $\Sigma$ is fixed by the $G$-action.
    \end{enumerate}
    In the case where $\mathcal{L}$, $\alpha$, and $\beta$ are evident from the context, we omit these symbols in the notation.
\end{definition}
\begin{remark}\label{facts_Weierstrass_modles}
    The morphism $p\colon W(\mathcal{L},\alpha,\beta)\rightarrow Y$ is projective, flat, and surjective. Its discriminant locus is given by $\mathrm{div}(4\alpha^3+27\beta^2)$, and the total space $W(\mathcal{L},\alpha,\beta)$ is normal, see \cite[][1.2]{Nakayama_Weierstrass}. In the case that the discriminant locus is a normal crossing divisor, the Weierstraß model $p\colon W(\mathcal{L},\alpha,\beta)\rightarrow Y$ is minimal if and only if $W(\mathcal{L},\alpha,\beta)$ has at worst rational singularities by \cite[][Cor. 2.4]{Nakayama_Weierstrass}. Moreover, there is an isomorphism $R^1p_*\OO_{W(\mathcal{L},\alpha,\beta)}\cong\mathcal{L}$ \cite[][1.2]{Nakayama_Weierstrass}. Suppose that a finite group $G$ acts on $W(\mathcal{L},\alpha,\beta)$ and on $Y$ such that the Weierstraß model $p\colon W(\mathcal{L},\alpha,\beta)\rightarrow Y$ is $G$-equivariant. Then we can apply the flat base change theorem \cite[][III. Prop. 9.3]{Hartshorne_AG} to $R^1p_*\OO_{W(\mathcal{L},\alpha,\beta)}\cong\mathcal{L}$ to get a $G^{op}$-action on $\mathcal{L}$.
\end{remark}
\begin{theorem}\label{bimeromorphic_classification_elliptic_fibration_with_section}\cite[][Thm. 2.5]{Nakayama_Weierstrass}
    Let $f\colon X\rightarrow Y$ be an elliptic fibration over a connected complex manifold $Y$. Suppose that $f$ admits a meromorphic section. Then there exists a unique minimal Weierstraß model $p\colon W(\mathcal{L},\alpha,\beta)\rightarrow Y$ that is bimeromorphic to $f$.
\end{theorem}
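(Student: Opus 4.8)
The plan is to first trivialize the situation over the smooth locus $Y^*=Y\setminus\Delta$ and then to extend the resulting data across the discriminant. Since $f$ is smooth and proper over $Y^*$ with genus-$1$ fibres, the given meromorphic section extends, by properness of the fibres, to a holomorphic section $O\colon Y^*\rightarrow X^*:=f^{-1}(Y^*)$, which I would take as the zero section. Set $\mathcal{L}:=R^1f_*\OO_X$; over $Y^*$ relative Serre duality identifies this with the dual of $O^*\omega_{X^*/Y^*}$, hence with the normal bundle $N_{O/X^*}$ of the section, and it agrees with the line bundle appearing in \Cref{facts_Weierstrass_modles}.

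Next I would carry out the fibrewise Riemann--Roch computation relative to $O$. On a single elliptic curve the spaces $H^0(\OO(O))$, $H^0(\OO(2O))$, $H^0(\OO(3O))$ have dimensions $1,2,3$, with new generators $1$, $x$, $y$ having poles of order $2,3$ along the origin. Applying cohomology and base change to $f_*\OO_{X^*}(nO)$, these glue to the locally free sheaves $\OO_{Y^*}$, $\OO_{Y^*}\oplus\mathcal{L}^2$, and $\OO_{Y^*}\oplus\mathcal{L}^2\oplus\mathcal{L}^3$, the twists being governed by the normal bundle $\mathcal{L}=N_{O/X^*}$. The relative coordinates $x$ and $y$ then scale as sections of $\mathcal{L}^{-2}$ and $\mathcal{L}^{-3}$, so the unique linear relation among the $7$ elements $1,x,y,x^2,xy,x^3,y^2$ of the rank-$6$ sheaf $f_*\OO_{X^*}(6O)$ takes the Weierstraß form $y^2z=x^3+\alpha xz^2+\beta z^3$ with $\alpha\in H^0(Y^*,\mathcal{L}^{-4})$ and $\beta\in H^0(Y^*,\mathcal{L}^{-6})$. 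This realizes $X^*$ as the Weierstraß model $W(\mathcal{L},\alpha,\beta)$ over $Y^*$, with $\Sigma$ corresponding to $O$ inside $\mathbb{P}(\OO_{Y^*}\oplus\mathcal{L}^2\oplus\mathcal{L}^3)$.

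It then remains to extend $(\mathcal{L},\alpha,\beta)$ across $\Delta$ and to establish minimality and uniqueness. Because $Y$ is a smooth manifold and $\Delta$ is a divisor, $\mathcal{L}$ extends to a line bundle on $Y$, but only up to twisting by divisors supported on $\Delta$; each such twist $\mathcal{L}\rightsquigarrow\mathcal{L}(-D)$ multiplies $\alpha$ and $\beta$ by the corresponding sections of $\OO(4D)$ and $\OO(6D)$. I would fix the extension so that $\alpha$ and $\beta$ are honest holomorphic sections of $\mathcal{L}^{-4}$ and $\mathcal{L}^{-6}$ and so that no prime divisor $\Delta'$ makes $\mathrm{div}(\alpha)-4\Delta'$ and $\mathrm{div}(\beta)-6\Delta'$ simultaneously effective; this is precisely the minimality condition of \Cref{definition_Weierstrass_models}. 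Uniqueness then follows because any two minimal Weierstraß models bimeromorphic to $f$ restrict over $Y^*$ to the same smooth fibration with section, hence share $(\mathcal{L},\alpha,\beta)|_{Y^*}$ up to the scaling automorphisms $(\alpha,\beta)\mapsto(u^4\alpha,u^6\beta)$ with $u\in\OO^*(Y^*)$, and minimality forces their extensions across $\Delta$ to coincide.

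The main obstacle is the analysis over the discriminant $\Delta$: one must show that, after the correct normalization of $\mathcal{L}$, the sections $\alpha$ and $\beta$ genuinely extend holomorphically with the prescribed vanishing orders, and that the minimal normalization is unique. Over $Y^*$ everything reduces to classical Riemann--Roch on elliptic curves, but near the singular fibres the extension is controlled by the local degeneration type, through a Kodaira/Tate-style analysis of the monodromy and of the vanishing orders of $\alpha$, $\beta$, and $4\alpha^3+27\beta^2$ along the components of $\Delta$; it is this local study that does the real work.
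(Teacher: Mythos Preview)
The paper does not supply its own proof of this statement; it is quoted verbatim as \cite[Thm.~2.5]{Nakayama_Weierstrass} and used as a black box in the preliminaries. Your sketch is a faithful outline of Nakayama's original argument: construct the Weierstra\ss\ data over the smooth locus via relative Riemann--Roch for $f_*\OO_{X^*}(nO)$, then normalize the extension of $(\mathcal{L},\alpha,\beta)$ across $\Delta$ by the minimality condition. So there is nothing to compare---you have reproduced the cited proof rather than diverged from it.

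One small caution: you first set $\mathcal{L}:=R^1f_*\OO_X$ globally on $Y$, but later (correctly) treat its extension across $\Delta$ as ambiguous up to twists by $\OO_Y(D)$ with $D$ supported on $\Delta$. These two viewpoints are in tension: for an arbitrary elliptic fibration $R^1f_*\OO_X$ need not be locally free over $\Delta$, and even when it is, it does not in general give the minimal extension. Nakayama's argument, and your second paragraph, really only produce $\mathcal{L}$ over $Y^*$; the global $\mathcal{L}$ is then \emph{defined} by the minimality normalization, and only a posteriori does one check $R^1p_*\OO_W\cong\mathcal{L}$ for the resulting Weierstra\ss\ model (this is the content of \Cref{facts_Weierstrass_modles}). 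Apart from this bookkeeping, your identification of the local Kodaira/Tate analysis along $\Delta$ as the substantive step is accurate.
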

\begin{definition}\cite[][3E]{Claudon_Hoering_Lin_fundamental_group}
    Let $Y$ be a connected complex manifold and let $G$ be a finite group acting on $Y$. Let $\Delta\subset Y$ be a $G$-equivariant normal crossing divisor and let $H$ be a $G$-equivariant variation of Hodge structures on $j\colon Y^*:=Y\setminus\Delta\hookrightarrow Y$ associated to some elliptic fiber bundle over $Y^*$. We define
    \begin{equation*}
        \mathcal{E}_G(Y,\Delta,H)
    \end{equation*}
    to be the set of equivalence classes of $G$-equivariant elliptic fibrations $f\colon X\rightarrow Y$ that satisfy the following properties:
    \begin{enumerate}[label=(\roman*)]
        \item $f\colon X\rightarrow Y$ has local meromorphic sections over every point of $Y$,
        \item the elliptic fibration $f^{-1}(Y^*)\rightarrow Y^*$ is bimeromorphic to an elliptic fiber bundle over $Y^*$, and
        \item this elliptic fibration induces $H$ as its $G$-equivariant variation of Hodge structures.
    \end{enumerate}
    Two such elliptic fibrations are equivalent if they are $G$-equivariantly bimeromorphic.
\end{definition}
\begin{lemma}\label{existence_unique_eq_Weierstrass_model}\cite[][Cor. 2.7]{Nakayama_Weierstrass}
    Let $Y$ be a connected complex manifold and let $G$ be a finite group acting on $Y$. Let $\Delta\subset Y$ be a $G$-equivariant normal crossing divisor and let $H$ be a $G$-equivariant variation of Hodge structures on $j\colon Y^*:=Y\setminus\Delta\hookrightarrow Y$ associated to some elliptic fiber bundle over $Y^*$. Then there exists a unique minimal Weierstraß model $p\colon W(\mathcal{L},\alpha,\beta)\rightarrow Y$ that is smooth over $Y^*$ and induces $H$ as its $G$-equivariant variation of Hodge structures.
\end{lemma}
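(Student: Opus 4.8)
The plan is to build the required Weierstraß model over the smooth locus $Y^*$ from the variation of Hodge structures, extend the Weierstraß data across the normal crossing divisor $\Delta$, and then extract the $G$-equivariance from the uniqueness clause by transport of structure. First I would form, from the datum $H$, its Jacobian fibration $\pi\colon J_H\to Y^*$ as in \Cref{def_variation_HS_Jacobian_fibration}; this is a smooth elliptic fibration carrying the zero section $\sigma_0$. Applying \Cref{bimeromorphic_classification_elliptic_fibration_with_section} over the connected manifold $Y^*$ identifies $J_H$ with a minimal Weierstraß model $W(\mathcal{L}^*,\alpha^*,\beta^*)\to Y^*$ for suitable data over $Y^*$. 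It then remains to extend this datum across $\Delta$: the line bundle $\mathcal{L}^*$ extends to a line bundle $\mathcal{L}$ on $Y$ because $\Delta$ is a divisor, and the sections $\alpha^*,\beta^*$ extend to meromorphic sections which, after absorbing their behavior along $\Delta$ into $\mathcal{L}$ and passing to the minimal model, become honest sections $\alpha\in H^0(Y,\mathcal{L}^{-4})$ and $\beta\in H^0(Y,\mathcal{L}^{-6})$. These define a minimal Weierstraß model $p\colon W(\mathcal{L},\alpha,\beta)\to Y$ whose discriminant $\mathrm{div}(4\alpha^3+27\beta^2)$ is supported on $\Delta$, so that $p$ is smooth over $Y^*$ and, agreeing with $J_H$ there, induces $H$.

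That these sections extend and that the minimal extension is unique is where the work lies. The extension rests on the quasi-unipotency of the local monodromies of $H$ around the components of $\Delta$ together with Deligne's canonical extension of the Hodge bundle, and the uniqueness follows by comparing any two candidate models over $Y^*$—where each is the minimal Weierstraß model of $J_H$, hence they are isomorphic—and extending this isomorphism across $\Delta$. The isomorphism a priori extends only meromorphically, but the minimality hypothesis on both models rigidifies the twist of $\mathcal{L}$ along $\Delta$ and forces it to be an honest isomorphism. This local analysis of minimal Weierstraß models over a normal crossing discriminant is the technical heart of the statement and the step I expect to be the main obstacle.

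Granting non-equivariant existence and uniqueness, the $G$-equivariance follows formally. For each $g\in G$ the automorphism $\sigma_g\colon Y\to Y$ preserves $\Delta$ and $Y^*$, and since $H$ is $G$-equivariant we are given an identification $\sigma_g^*H\cong H$; hence $\sigma_g^*W\to Y$ is again a minimal Weierstraß model that is smooth over $Y^*$ and induces $H$. By the uniqueness above there is an isomorphism $\varphi_g\colon\sigma_g^*W\xrightarrow{\sim}W$ over $Y$ inducing the prescribed identification on Hodge structures, and because a minimal Weierstraß model admits no nontrivial automorphism over $Y$ fixing $\Sigma$ and inducing the identity on $H$—the fibrewise inversion acts as $-\id$ on $H$, and a fibrewise scaling acts nontrivially on the Hodge bundle—this $\varphi_g$ is unique. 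Uniqueness then forces the $\varphi_g$ to be compatible with composition, so that $\{\varphi_g\}_{g\in G}$ is a $G$-linearization of $W$ defining a $G$-action for which $p$ is $G$-equivariant. The canonical section $\Sigma=\{x=z=0\}$ is the neutral element of the fibrewise group law on $W(\mathcal{L},\alpha,\beta)^\#$ and is therefore preserved by every $\varphi_g$, hence fixed by $G$, as required by \Cref{definition_Weierstrass_models}. The induced action on $\mathcal{L}\cong R^1p_*\OO_{W}$ and on $\alpha,\beta$ is the one recorded in \Cref{facts_Weierstrass_modles}, and by construction the resulting $G$-equivariant variation of Hodge structures is $H$.
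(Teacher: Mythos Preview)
The paper does not supply its own proof of this lemma; it is stated with a bare citation to Nakayama's paper on Weierstra\ss\ models and used as a black box throughout. There is therefore nothing in the paper to compare your proposal against.

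That said, your outline is a faithful sketch of how the cited result is obtained. The non-equivariant existence and uniqueness over $Y$ is indeed what Nakayama proves, and the mechanism is the one you describe: one builds the Weierstra\ss\ model of the Jacobian fibration over $Y^*$ and extends the Hodge line bundle across $\Delta$ via Deligne's canonical extension, with minimality pinning down the twist along each component of $\Delta$. You are right to flag this extension step as the technical core and to leave it as a pointer rather than attempt it in full. The $G$-equivariance is not literally in Nakayama's corollary but is the mild enhancement recorded in the Claudon--H\"oring--Lin reference the paper cites elsewhere; your transport-of-structure argument, together with the rigidity claim that the only automorphism of a minimal Weierstra\ss\ model over $Y$ fixing $\Sigma$ and inducing the identity on $H$ is the identity, is exactly how that enhancement is justified, and your analysis of the fibrewise automorphisms is correct.
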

We can construct elements of $\mathcal{E}_G(Y,\Delta,H)$ as follows: Let $G$ be a finite group and let $p\colon W(\mathcal{L},\alpha,\beta)\rightarrow Y$ be a minimal $G$-equivariant Weierstraß model that is smooth over $Y^*$ and has $H$ as its associated variation of Hodge structures, which exists by \Cref{existence_unique_eq_Weierstrass_model}. The subspace $W(\mathcal{L},\alpha,\beta)^\#\subset W(\mathcal{L},\alpha,\beta)$ defines a complex analytic group variety over $Y$ with zero section $\Sigma\subset W(\mathcal{L},\alpha,\beta)^\#$, where the group structure is given by the addition of local sections with respect to $\Sigma$. The action of $W(\mathcal{L},\alpha,\beta)^\#$ on $W(\mathcal{L},\alpha,\beta)^\#$ can be extended to an action of $W(\mathcal{L},\alpha,\beta)^\#$ on $W(\mathcal{L},\alpha,\beta)$ \cite[][5.1.1]{Nakayama_global_elliptic_fibration}. By gluing local patches of $p\colon W(\mathcal{L},\alpha,\beta)\rightarrow Y$ along a cocycle of local sections representing the cohomology class $\eta_G\in H_G^1(Y,\mathcal{W}(\mathcal{L},\alpha,\beta)^\#)$, one can thus construct the twisted $G$-equivariant Weierstraß model $p^{\eta_G}\colon W(\mathcal{L},\alpha,\beta)^{\eta_G}\longrightarrow Y$ \cite[][3E]{Claudon_Hoering_Lin_fundamental_group}. This construction defines a map
\begin{equation}\label{twisted_Weierstrass_models}
    H_G^1(Y,\mathcal{W}(\mathcal{L},\alpha,\beta)^\#)\longrightarrow\mathcal{E}_G(Y,\Delta,H).
\end{equation}
\begin{proposition}\label{bimeromorphic_classification_elliptic_fibrations}\cite[][Prop. 2.10]{Nakayama_Weierstrass}\cite[][Prop. 5.5.1]{Nakayama_global_elliptic_fibration}\cite[][3E]{Claudon_Hoering_Lin_fundamental_group}
    Let $p\colon W(\mathcal{L},\alpha,\beta)\rightarrow Y$ be a minimal $G$-equivariant Weierstraß model, such that the discriminant locus $\Delta:=\mathrm{div}(4\alpha^3+27\beta^2)$ is a normal crossing divisor. Denote by $j\colon Y^*\subset Y$ the inclusion and let $H:=R^1p_*\Z\vert_{Y^*}$ be the associated variation of Hodge structures.
    \begin{enumerate}[label=(\roman*)]
        \item There is a $G$-equivariant short exact sequence
        \begin{equation}\label{ses_vhs_line_bundle_W}
            0\longrightarrow j_*H_\Z\longrightarrow\mathcal{L}\longrightarrow\mathcal{W}^\#\longrightarrow 0.
        \end{equation}
        \item There is an injective map
        \begin{equation}\label{injection_into_H1(Wmer)}
            \mathcal{E}_G(Y,\Delta,H)\longhookrightarrow H_G^1(Y,\mathcal{W}^{mer})
        \end{equation}
        such that the composition
        \begin{equation}\label{composition_construction_injection_into_H1(Wmer)}
            H_G^1(Y,\mathcal{W}(\mathcal{L},\alpha,\beta)^\#)\longrightarrow\mathcal{E}_G(Y,\Delta,H)\longhookrightarrow H_G^1(Y,\mathcal{W}^{mer})
        \end{equation}
        equals the map in cohomology induced by the inclusion $\mathcal{W}^\#\subset\mathcal{W}^{mer}$.
    \end{enumerate}
\end{proposition}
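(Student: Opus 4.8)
The plan is to reduce both statements to their non-equivariant counterparts in the cited work of Nakayama and then to verify that all constructions are natural enough to carry a compatible $G$-action, so that the equivariant enhancement is essentially formal.

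For part (i), I would first establish the sequence over the smooth locus $Y^*$. There the fibration $p\colon W^\#\to Y^*$ is a smooth elliptic fibration equipped with the section $\Sigma$, so it agrees with its own Jacobian. Using the identifications from \Cref{def_variation_HS_Jacobian_fibration}, the restriction $\mathcal{W}^\#|_{Y^*}$ is the sheaf of sections $\mathcal{J}_H=\mathcal{H}^{0,1}/H_\Z$, while $\mathcal{L}|_{Y^*}\cong R^1p_*\OO_W|_{Y^*}\cong\mathcal{H}^{0,1}$; the claimed sequence is then precisely the exponential sequence $0\to H_\Z\to\mathcal{H}^{0,1}\to\mathcal{J}_H\to 0$. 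The content special to minimal Weierstraß models — and the step I expect to require Nakayama's local analysis near $\Delta$ — is that applying $j_*$ and using $R^1p_*\OO_W\cong\mathcal{L}$ (see \Cref{facts_Weierstrass_modles}) extends this to an exact sequence $0\to j_*H_\Z\to\mathcal{L}\to\mathcal{W}^\#\to 0$ on all of $Y$, including over the singular fibers. I would cite this extension and then check $G$-equivariance: the $G$-action on $\mathcal{W}^\#$ is induced by the action on $W^\#$, the action on $j_*H_\Z$ by the action on the local system $H_\Z$ over $Y^*$, and the action on $\mathcal{L}$ is defined in \Cref{facts_Weierstrass_modles} precisely so that $R^1p_*\OO_W\cong\mathcal{L}$ is $G$-equivariant; since all three maps in \eqref{ses_vhs_line_bundle_W} arise functorially from the elliptic fibration, they intertwine these actions.

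For part (ii), the map \eqref{injection_into_H1(Wmer)} is built by encoding an element of $\mathcal{E}_G(Y,\Delta,H)$ as a gluing cocycle of copies of the fixed minimal Weierstraß model $W$. Given a $G$-equivariant fibration $f\colon X\to Y$ with local meromorphic sections inducing $H$, I would choose a $G$-invariant open cover $\{U_i\}$ over which $f$ admits meromorphic sections, so by \Cref{bimeromorphic_classification_elliptic_fibration_with_section} each $f^{-1}(U_i)\to U_i$ is bimeromorphic to $W|_{U_i}$; the transition maps on overlaps are automorphisms of the Weierstraß model respecting $\Sigma$, hence translations by local meromorphic sections, and assemble into a class in $H^1_G(Y,\mathcal{W}^{mer})$ once the choices are arranged $G$-equivariantly, which is possible since $G$ is finite. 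Injectivity amounts to the statement that equality of these cocycles up to coboundary produces a $G$-equivariant bimeromorphism of the glued spaces, which is Nakayama's non-equivariant argument carried out $G$-equivariantly. Finally, the compatibility asserted in \eqref{composition_construction_injection_into_H1(Wmer)} is immediate from the construction \eqref{twisted_Weierstrass_models}: the twisted model $W^{\eta_G}$ is glued along a cocycle valued in $\mathcal{W}^\#$, and its image under \eqref{injection_into_H1(Wmer)} is represented by the very same cocycle now regarded in $\mathcal{W}^{mer}$ via the inclusion $\mathcal{W}^\#\subset\mathcal{W}^{mer}$.

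The genuine obstacle is the extension of the exponential sequence across the discriminant in part (i), which rests on the local description of minimal Weierstraß fibers; this is exactly where minimality — equivalently, at worst rational singularities, cf. \Cref{facts_Weierstrass_modles} — enters, and where I would lean on Nakayama. By contrast, the equivariant bookkeeping in both parts is formal: since $G$ is finite and all covers and cocycles can be chosen $G$-stably, the passage from ordinary sheaf cohomology to $H^1_G$ preserves exactness and injectivity, so no new analytic input is needed beyond the non-equivariant theory.
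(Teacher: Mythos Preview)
The paper does not supply its own proof of this proposition; it is stated as a citation of results from Nakayama and Claudon--H\"oring--Lin, with no accompanying argument. Your outline is an accurate reconstruction of how those cited proofs run: part (i) is the canonical extension across $\Delta$ of the exponential sequence for the Jacobian fibration, relying on Nakayama's local analysis of minimal Weierstra\ss{} models and the identification $R^1p_*\OO_W\cong\mathcal{L}$; part (ii) encodes a fibration with local meromorphic sections as a gluing cocycle of the fixed Weierstra\ss{} model by meromorphic translations, with the $G$-equivariant enhancement carried out formally exactly as you describe and as in Claudon--H\"oring--Lin. One minor imprecision: in your cocycle construction the transition bimeromorphisms $\phi_j\circ\phi_i^{-1}$ on $W|_{U_{ij}}$ do not literally ``respect $\Sigma$''---rather, they send $\Sigma$ to the difference of the chosen local sections---but the conclusion that they are translations by meromorphic sections is correct, since bimeromorphic automorphisms of $W$ over $Y$ that induce the identity on the variation of Hodge structures are precisely such translations.
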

\begin{proposition}\label{Weierstrass_model_Kaehler}\cite[][Thm. 3.20, Prop. 3.23]{Claudon_Hoering_Lin_fundamental_group}
    Let $p\colon W\rightarrow Y$ be a minimal Weierstraß model over a compact K\"ahler manifold such that the discriminant locus $\Delta$ is a normal crossing divisor. Denote by $j\colon Y^*:=Y\setminus\Delta\hookrightarrow Y$ the inclusion and let $H:=R^1p_*\Z\vert_{Y^*}$ be the associated variation of Hodge structures. Let $\eta\in H^1(Y,\mathcal{W}^\#)$ be a cohomology class. Then the following are equivalent:
    \begin{enumerate}[label=(\roman*)]
        \item The total space $W(\mathcal{L},\alpha,\beta)^\eta$ is bimeromorphic to a compact K\"ahler manifold.
        \item The boundary map associated to the short exact sequence $0\rightarrow j_*H_\Z\rightarrow\mathcal{L}\rightarrow\mathcal{W}^\#\rightarrow 0$ sends $\eta$ to a torsion class in $H^2(Y,j_*H_\Z)$.
    \end{enumerate}
\end{proposition}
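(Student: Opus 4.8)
The plan is to establish the two implications separately, in each case reading off the relevant obstruction from the short exact sequence \eqref{ses_vhs_line_bundle_W} of \Cref{bimeromorphic_classification_elliptic_fibrations}. Write $c\colon H^1(Y,\mathcal{W}^\#)\to H^2(Y,j_*H_\Z)$ for the boundary map of \eqref{ses_vhs_line_bundle_W}, so that condition (ii) is precisely the assertion that $c(\eta)$ is torsion. Since being bimeromorphic to a compact K\"ahler manifold is a bimeromorphic invariant, I may replace $W^\eta$ by a desingularization and analyze the induced elliptic fibration over the smooth locus $Y^*$, controlling the behaviour along the normal crossing discriminant $\Delta$ by the minimality of the Weierstra\ss{} model, which guarantees normality and at worst rational singularities (\Cref{facts_Weierstrass_modles}). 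The guiding idea is that $c(\eta)$ plays exactly the role of the Chern class in the special torus bundle situation, so the argument should run parallel to \Cref{fundamental_properties_special_torus_bundles} \cref{torus_bundle_kaehler}, with the variation of Hodge structures $H$ in place of a constant one.

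For the implication (ii)$\Rightarrow$(i) I would first treat the case $c(\eta)=0$. Exactness of the long exact sequence of \eqref{ses_vhs_line_bundle_W} then shows that $\eta$ lies in the image of $H^1(Y,\mathcal{L})\to H^1(Y,\mathcal{W}^\#)$, so that $W^\eta$ differs from the untwisted model $W=W^0$ only by a class coming from the vector bundle $\mathcal{L}$. As $p\colon W\to Y$ is projective (\Cref{facts_Weierstrass_modles}) and $Y$ is compact K\"ahler, $W$ is K\"ahler; one then patches a closed positive $(1,1)$-form on (a resolution of) $W^\eta$ out of a K\"ahler form pulled back from $Y$ and a fibrewise form built from the chosen lift in $H^1(Y,\mathcal{L})$, exactly as in the special torus bundle computation. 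For general torsion $c(\eta)$ with $N\cdot c(\eta)=0$, the class $N\eta$ again lies in the image of $H^1(Y,\mathcal{L})$, so $W^{N\eta}$ is K\"ahler by the previous case; I would relate $W^\eta$ to $W^{N\eta}$ by the fibrewise multiplication-by-$N$ isogeny, which is finite and surjective, and descend K\"ahlerness along it using that a variety finite and surjective over a K\"ahler variety is itself K\"ahler.

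The implication (i)$\Rightarrow$(ii) is where the real work lies. Assume $W^\eta$ is bimeromorphic to a compact K\"ahler manifold and fix a K\"ahler class on a resolution. The goal is to show that the K\"ahler hypothesis constrains the period so strongly that $c(\eta)$ must vanish rationally, i.e.\ be torsion. I would restrict to $Y^*$, where $W^\eta$ is a smooth torsor under the Jacobian $J_H$ and $c(\eta)|_{Y^*}$ is its topological twisting class, and argue by Hodge theory: on a compact K\"ahler manifold the fibrewise holomorphic data of $p$ is forced to extend to global \emph{closed} forms, and a non-torsion twisting class obstructs such an extension. This is exactly the phenomenon isolated in \Cref{fundamental_properties_special_torus_bundles} \cref{torus_bundle_kaehler} for a constant variation, and the content here is to reproduce it for the possibly non-isotrivial $H$ and to propagate the conclusion across $\Delta$ to the global class $c(\eta)\in H^2(Y,j_*H_\Z)$.

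I expect the principal obstacle to be this last step, and it splits into two technical difficulties. First, one must handle the singular fibres over $\Delta$: the obstruction is naturally defined over $Y^*$, and one has to argue that it is faithfully recorded by the global class $c(\eta)$, for which the sheaf $j_*H_\Z$ and the minimality of the Weierstra\ss{} model (ensuring rational singularities) are essential. Second, the absence of a global trivialization of $H$ means that the explicit form-patching available for a constant variation must be replaced by a genuinely period-theoretic argument over $Y^*$, together with careful control of the monodromy near the normal crossing divisor.
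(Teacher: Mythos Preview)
The paper does not supply a proof of this proposition: it is stated with a citation to \cite[][Thm.~3.20, Prop.~3.23]{Claudon_Hoering_Lin_fundamental_group} and used as a black box. There is therefore no in-paper argument to compare your proposal against.

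Assessing your sketch on its own: the reduction in (ii)$\Rightarrow$(i) from torsion $c(\eta)$ to $c(\eta)=0$ via the fibrewise multiplication-by-$N$ map is sound (finite surjective maps preserve K\"ahlerness in both directions), but your base case is not actually carried out. The phrase ``one then patches a closed positive $(1,1)$-form \dots\ out of a K\"ahler form pulled back from $Y$ and a fibrewise form built from the chosen lift in $H^1(Y,\mathcal{L})$'' hides the entire difficulty: unlike the smooth torus-bundle case of \Cref{fundamental_properties_special_torus_bundles}, here $W^\eta$ has singular fibres over $\Delta$, and you have said nothing about why the patched form extends (or remains positive) across a resolution of those fibres. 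The route taken in the cited reference passes instead through the tautological family and a deformation argument rather than an explicit K\"ahler form construction, precisely to sidestep this issue.

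For (i)$\Rightarrow$(ii) you openly concede that the argument is not written: you identify the two obstacles (extending the obstruction from $Y^*$ to $Y$ via $j_*H_\Z$, and replacing the explicit computation for constant $H$ by a period-theoretic one) but give no mechanism for either. So as it stands your proposal is a correct diagnosis of what needs to be done, together with a partially justified outline of (ii)$\Rightarrow$(i), rather than a proof.
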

\subsection{Tautological models and algebraic approximations}
There are classes in $\mathcal{E}_G(Y,\Delta,H)$ that cannot be represented by a twisted Weierstraß model, the reason being that the map $H^1(Y,\mathcal{W}^\#)\rightarrow H^1(Y,\mathcal{W}^{mer})$ may not be surjective. Lin introduced the notion of a tautological model \cite[][3.4]{Lin_algebraic_approximation_codim_1}, which are finite covers of Weierstraß models and showed that every class in $\mathcal{E}_G(Y,\Delta,H)$ that comes from an elliptic fibration $f\colon X\rightarrow Y$, where $X$ is bimeromorphic to a compact K\"ahler manifold, can be represented by a tautological model. Moreover, tautological models fit into a family.
\begin{proposition}\label{tautological_models}\cite[][Lem. 3.19, Lem. 3.24]{Claudon_Hoering_Lin_fundamental_group}\cite[][3.4]{Lin_algebraic_approximation_codim_1}
    Let $f\colon X\rightarrow Y$ be a $G$-equivariant elliptic fibration from a compact complex analytic variety $X$ bimeromorphic to a compact K\"ahler manifold to a connected complex manifold $Y$. Suppose that the discriminant divisor of $f$ is a normal crossing divisor and that $f$ admits local meromorphic sections over every point of $Y$. Denote by $H:=R^1f_*\Z\vert_{Y^*}$ the variation of Hodge structures induced by $f$ and denote by $W(\mathcal{L},\alpha,\beta)\rightarrow Y$ the unique $G$-equivariant minimal Weierstraß model associated to $(Y,\Delta,H)$ by \Cref{existence_unique_eq_Weierstrass_model}.
    \begin{enumerate}[label=(\roman*)]
        \item Let $\eta_G\in H^1_G(Y,\mathcal{W}^{mer})$ be the image of the class of $f$ under the map $\mathcal{E}_G(Y,\Delta,H)\rightarrow H^1_G(Y,\mathcal{W}^{mer})$ from \cref{injection_into_H1(Wmer)} above. Then there is a positive integer $m$ such that $m\eta_G$ can be lifted to a class $\eta'_G\in H^1_G(Y,\mathcal{W}^\#)$.
        \item There is a $G$-equivariant commutative diagram
        \[\begin{tikzcd}
	       {T} && {W(\mathcal{L},\alpha,\beta)^{\eta'_G}} \\
	       & {Y} && {,}
	       \arrow["m", from=1-1, to=1-3]
	       \arrow["g"', from=1-1, to=2-2]
	       \arrow["p", from=1-3, to=2-2]
        \end{tikzcd}\]
        where $g\colon T\rightarrow Y$ is an elliptic fibration $G$-equivariantly bimeromorphic to $f\colon X\rightarrow Y$, and $m\colon T\rightarrow W(\mathcal{L},\alpha,\beta)^{\eta'_G}$ is finite of degree $m$.
        \item The total space of $T$ is normal and the discriminant locus of $g$ equals the discriminant locus of $p$.
        \item Over the smooth locus of $g$ and $p$, $m$ is the multiplication by $m$-map.
    \end{enumerate}
\end{proposition}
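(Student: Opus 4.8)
The plan is to analyse the inclusion of sheaves of abelian groups $\mathcal{W}^\#\hookrightarrow\mathcal{W}^{mer}$, for which the lifting asked for in (i) is governed by the quotient $\mathcal{Q}:=\mathcal{W}^{mer}/\mathcal{W}^\#$, and then to feed the resulting class $\eta'_G$ into the twisting construction \eqref{twisted_Weierstrass_models} and into Lin's tautological cover. First I would note that, since $f$ is proper and surjective with $X$ compact, the base $Y$ is compact. Over the smooth locus $Y^*$ every fibre of $p\colon W\to Y$ is a smooth projective elliptic curve; as an abelian scheme admits no meromorphic sections with nonempty indeterminacy, meromorphic sections over $Y^*$ are holomorphic with image in $W^\#$, so the two sheaves agree there and $\mathcal{Q}$ is supported on the discriminant $\Delta$. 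Taking the $G$-equivariant long exact sequence of $0\to\mathcal{W}^\#\to\mathcal{W}^{mer}\to\mathcal{Q}\to0$, the class $\eta_G$ lifts to $H^1_G(Y,\mathcal{W}^\#)$ precisely when its image $\rho(\eta_G)\in H^1_G(Y,\mathcal{Q})$ vanishes.

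For (i) it therefore suffices to show that $\rho(\eta_G)$ is torsion, and this is the point at which the hypothesis that $X$ is bimeromorphic to a compact K\"ahler manifold \emph{must} enter. By the equivariant form of the K\"ahler criterion recorded in \Cref{Weierstrass_model_Kaehler}, which is the content of \cite[][Lem. 3.19, Lem. 3.24]{Claudon_Hoering_Lin_fundamental_group}, the boundary class of $\eta_G$ attached to the sequence $0\to j_*H_\Z\to\mathcal{L}\to\mathcal{W}^\#\to0$ is torsion in $H^2_G(Y,j_*H_\Z)$. Granting this, a positive multiple $m\eta_G$ has trivial boundary class, and the compatibility of the boundary maps of the holomorphic and the meromorphic torsor sequences yields $\rho(m\eta_G)=m\rho(\eta_G)=0$. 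Hence $m\eta_G$ admits a lift $\eta'_G\in H^1_G(Y,\mathcal{W}^\#)$, which is (i).

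For (ii)--(iv) I would first apply \eqref{twisted_Weierstrass_models} to $\eta'_G$ to produce the $G$-equivariant twisted Weierstraß model $p\colon W^{\eta'_G}\to Y$; by \Cref{bimeromorphic_classification_elliptic_fibrations} its image under \eqref{injection_into_H1(Wmer)} is the class $m\eta_G$. The tautological model $T$ is then constructed as in \cite[][3.4]{Lin_algebraic_approximation_codim_1}: one glues the local patches of $p$ along a cocycle of meromorphic sections representing $\eta_G$, whose $m$-fold sum is the holomorphic cocycle defining $W^{\eta'_G}$, and normalises, obtaining a normal total space together with a finite morphism $m\colon T\to W^{\eta'_G}$ that over $Y^*$ is the relative multiplication-by-$m$ map of the elliptic group scheme $W^\#|_{Y^*}$. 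Normalisation gives the normality asserted in (iii), and the description over $Y^*$ is exactly (iv). Since this morphism is finite and \'etale over $Y^*$, the discriminant of $g$ is contained in $\Delta=\mathrm{div}(4\alpha^3+27\beta^2)$; that it equals $\Delta$ is part of the cited construction, completing (iii).

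It remains to identify $g$ with $f$. By construction the class of $g\colon T\to Y$ in $\mathcal{E}_G(Y,\Delta,H)$ maps to $\eta_G$ under \eqref{injection_into_H1(Wmer)}, which is the class of $f$; since that map is injective by \Cref{bimeromorphic_classification_elliptic_fibrations}, the elliptic fibrations $g$ and $f$ are $G$-equivariantly bimeromorphic, giving (ii). I expect the main obstacle to be the construction of the tautological cover $T$ itself: producing an honest normal elliptic fibration, rather than a merely formal or bimeromorphic object, whose class is exactly $\eta_G$ and whose total space remains normal after passing to the finite cover, and then matching its discriminant with $\Delta$. By comparison, the torsion statement in (i) is formal once the K\"ahler criterion supplies torsion of the relevant boundary class.
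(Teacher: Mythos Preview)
The paper does not supply a proof of this proposition; it is quoted from \cite[Lem.~3.19, Lem.~3.24]{Claudon_Hoering_Lin_fundamental_group} and \cite[3.4]{Lin_algebraic_approximation_codim_1}, so there is no in-text argument to compare against. Your sketch of (ii)--(iv) is in line with what those references do: one twists the minimal Weierstra\ss\ model by $\eta'_G$, builds $T$ as the normalised ``$m$-th root'' cover following Lin, and then identifies its bimeromorphism class via the injectivity of \eqref{injection_into_H1(Wmer)}.

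There is, however, a genuine gap in your argument for (i). You invoke the K\"ahler criterion of \Cref{Weierstrass_model_Kaehler}, but that statement concerns a class $\eta\in H^1(Y,\mathcal{W}^\#)$ and the boundary map of the sequence $0\to j_*H_\Z\to\mathcal{L}\to\mathcal{W}^\#\to 0$; the class $\eta_G$ lives in $H^1_G(Y,\mathcal{W}^{mer})$, where that boundary map is not defined. Your subsequent step, ``the compatibility of the boundary maps of the holomorphic and the meromorphic torsor sequences yields $\rho(m\eta_G)=0$'', does not follow: the connecting map of $0\to\mathcal{W}^\#\to\mathcal{W}^{mer}\to\mathcal{Q}\to 0$ lands in $H^2_G(Y,\mathcal{W}^\#)$, not in $H^2_G(Y,j_*H_\Z)$, and there is no general reason why vanishing of a class in the latter forces vanishing of $\rho(\eta_G)$ in $H^1_G(Y,\mathcal{Q})$. (Note also that \Cref{Weierstrass_model_Kaehler} cites \cite[Thm.~3.20, Prop.~3.23]{Claudon_Hoering_Lin_fundamental_group}, not Lemmas~3.19 and~3.24; the latter are precisely the source of the present proposition, so appealing to them as the ``K\"ahler criterion'' is circular.) What the cited lemmas actually do is analyse the sheaf $\mathcal{Q}=\mathcal{W}^{mer}/\mathcal{W}^\#$ directly and show, using the K\"ahler hypothesis on $X$, that the image of $\eta_G$ in $H^1_G(Y,\mathcal{Q})$ is torsion; this is the step you would need to supply rather than deduce from \Cref{Weierstrass_model_Kaehler}.
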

\begin{definition}\label{def_tautological_model}
    In the situation and notation of \Cref{tautological_models}, we call $g\colon T\rightarrow Y$ the tautological model associated to $f\colon X\rightarrow Y$, resp. to $\eta_G\in H_G^1(Y,\mathcal{W}^{mer})$.
\end{definition}
\begin{proposition}\label{tautological_families}\cite[][Prop.-Def. 3.17]{Lin_algebraic_approximation_codim_1}
    In the situation and notation of \Cref{tautological_models}, there exists a family of elliptic fibrations
    \begin{equation*}
        \Pi\colon\mathcal{T}\stackrel{q}{\longrightarrow}Y\times V\longrightarrow V:=H^1(Y,\mathcal{L})
    \end{equation*}
    satisfying the following properties.
    \begin{enumerate}[label=(\roman*)]
        \item For every $v\in V$ the fiber $\Pi^{-1}(v)$ is isomorphic to the tautological model that represents the class $\eta+\exp'(v)\in H^1(Y,\mathcal{W}^{mer})$, where $\eta$ is the image of $\eta_G$ via the natural map $H_G^1(Y,\mathcal{W}^{mer})\rightarrow H^1(Y,\mathcal{W}^{mer})$, and $\exp'$ denotes the composition $H^1(Y,\mathcal{L})\rightarrow H^1(Y,\mathcal{W}^\#)\rightarrow H^1(Y,\mathcal{W}^{mer})$.
        \item Denote by $V^G\subset V$ the $G$-invariant subspace. Then there is a $G$-action on $\mathcal{T}^G:=q^{-1}(Y\times V^G)$ such that the subfamily
        \begin{equation}
            \Pi^G\colon\mathcal{T}^G\stackrel{q^G}{\longrightarrow}Y\times V^G\longrightarrow V^G,
        \end{equation}
        where $q^G:=q\vert_{\mathcal{T}^G}$ is $G$-equivariantly locally trivial over $Y$, and the $G$-equivariant elliptic fibration parametrized by $v\in V$ is isomorphic to the tautological model that represents the class $\eta_G+\exp'_G(v)\in H_G^1(Y,\mathcal{W}^{mer})$, where $\exp'_G$ denotes the composition $H_G^1(Y,\mathcal{L})\rightarrow H_G^1(Y,\mathcal{W}^\#)\rightarrow H_G^1(Y,\mathcal{W}^{mer})$.
        \item\label{diagram_tautological_m_Weierstrass} Let $\Pi'\colon\mathcal{W}\rightarrow Y\times V\rightarrow V$ be the tautological family associated to the image $\eta'\in H^1(Y,\mathcal{W}^\#)$ of $\eta'_G$. Then there exists a map $m\colon\mathcal{T}\rightarrow\mathcal{W}$ over $Y\times V$ whose restriction to each fiber over $V$ is the multiplication by $m$-map defined in \Cref{tautological_models}. Over $V^G$, the map $m$ is $G$-equivariant.
        \[\begin{tikzcd}
	        {\mathcal{T}} && {\mathcal{W}} \\
	        & {Y\times V}
	        \arrow["m", from=1-1, to=1-3]
	        \arrow["\Pi"', from=1-1, to=2-2]
	        \arrow["{\Pi'}", from=1-3, to=2-2]
        \end{tikzcd}\]
    \end{enumerate}
\end{proposition}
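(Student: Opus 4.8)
The plan is to realize the family $\Pi\colon\mathcal{T}\to Y\times V\to V$, with $V=H^1(Y,\mathcal{L})$, by regluing local patches of the Weierstraß model along a Čech cocycle that varies holomorphically with the deformation parameter $v\in V$, thereby promoting the single-class construction of \eqref{twisted_Weierstrass_models} to a construction over the base $V$. The starting point is the action of the group space $W^\#$ on $W$ recalled before \eqref{twisted_Weierstrass_models}: for a fixed class $\eta\in H^1(Y,\mathcal{W}^\#)$ the twisted model $W^\eta$ is obtained by regluing the restrictions $W|_{U_i}$ over an open cover $\{U_i\}$ of $Y$ by the fiberwise translations belonging to a $1$-cocycle representing $\eta$. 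The idea is to let these translations absorb a term $\exp(v)$ coming from the exponential map of the sequence $0\to j_*H_\Z\to\mathcal{L}\to\mathcal{W}^\#\to 0$, so that the fiber over $v$ is the model twisted by $\eta'+\exp(v)$.

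Concretely, I would first fix a finite Stein cover $\mathfrak{U}=\{U_i\}$ of $Y$ that trivializes $\mathcal{L}$ and computes its cohomology by Čech cochains, and represent the lifted class $\eta'_G$ by a cocycle $(s_{ij})$ with $s_{ij}\in\mathcal{W}^\#(U_{ij})$. Since $V=H^1(Y,\mathcal{L})$ is a finite-dimensional complex vector space, I would choose a linear splitting $V\to Z^1(\mathfrak{U},\mathcal{L})$ of the projection onto cohomology, producing cocycle representatives $v_{ij}$ that are holomorphic on $U_{ij}\times V$ and linear in the $V$-direction. Gluing the trivial families $W|_{U_i}\times V$ over $U_{ij}\times V$ by the fiberwise translation along $s_{ij}+\overline{v_{ij}}$, where $\overline{(\,\cdot\,)}$ denotes the image of $\mathcal{L}$ in $\mathcal{W}^\#$, then defines a complex analytic space $\mathcal{W}\to Y\times V$; the cocycle condition on $(s_{ij})$ and $(v_{ij})$ is exactly what makes the gluing consistent on triple overlaps, and by construction the fiber over $v$ is the twisted Weierstraß model attached to $\eta'+\exp(v)$, giving the family $\Pi'$ of part (iii). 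The tautological family $\mathcal{T}$ and the map $m\colon\mathcal{T}\to\mathcal{W}$ would then be produced as a relative version of \Cref{tautological_models}: over the common smooth locus the group space acts fiberwise and $m$ is the relative multiplication-by-$m$ isogeny, which I would extend across the discriminant by relative normalization. Note that the discriminant is constant in the family because the patches $W|_{U_i}$ are, which also yields property (i).

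For the equivariant assertion, I would arrange $\mathfrak{U}$ and the splitting $V\to Z^1(\mathfrak{U},\mathcal{L})$ to be compatible with the $G$-action on $Y$ and the $G^{op}$-action on $\mathcal{L}$ from \Cref{facts_Weierstrass_modles}; restricting to the invariant subspace $V^G$ then makes the gluing data $G$-equivariant, induces the $G$-action on $\mathcal{T}^G$, and makes $m$ $G$-equivariant. Local triviality over $Y$ is essentially built in, since over a single $U_i$ all the relevant cohomology vanishes, so $\mathcal{T}^G|_{U_i\times V^G}$ is isomorphic over $U_i$ to a fixed local model times $V^G$, independent of $v$. The step I expect to be the main obstacle is making this local triviality genuinely $G$-equivariant: as $G$ may permute the patches $U_i$ and acts on $\mathcal{L}$ only through the twisted $G^{op}$-action, one must choose the local trivializations and the splitting of $V\to Z^1(\mathfrak{U},\mathcal{L})$ so that the induced identifications intertwine the fiberwise $G$-actions. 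This equivariant bookkeeping, rather than the gluing itself, is the delicate part underlying part (ii).
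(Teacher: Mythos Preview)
The paper does not supply its own proof of this proposition: it is quoted as a black box from \cite[Prop.-Def.~3.17]{Lin_algebraic_approximation_codim_1}, with the $G$-equivariant refinements taken from \cite[3E]{Claudon_Hoering_Lin_fundamental_group}. So there is no in-paper argument to compare your proposal against.

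That said, your sketch is precisely the strategy of Lin's original construction. The family $\mathcal{W}\to Y\times V$ is indeed built by choosing a Stein cover $\mathfrak{U}$, a cocycle $(s_{ij})$ for $\eta'$, and a linear section $V\to Z^1(\mathfrak{U},\mathcal{L})$, and then regluing $W|_{U_i}\times V$ along the translations $s_{ij}+\overline{v_{ij}}$; the tautological family $\mathcal{T}$ is obtained from $\mathcal{W}$ fiberwise via the multiplication-by-$m$ construction of \Cref{tautological_models}, and your observation that the local patches are independent of $v$ is exactly what gives local triviality over $Y$. One small point worth sharpening: the passage from $\mathcal{W}$ to $\mathcal{T}$ is not simply ``relative normalization across the discriminant'' but rather the relative version of Lin's explicit $m$-th root construction (pulling back the $W^\#$-torsor along multiplication by $m$ and compactifying), which is what guarantees that the resulting space is normal with the correct discriminant and that $m$ restricts to the isogeny on smooth fibers. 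Your identification of the $G$-equivariant bookkeeping as the delicate step is accurate; in the references this is handled by working from the outset with a $G$-stable cover and $G$-equivariant cocycle representatives, exactly as you propose.
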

\begin{definition}
    In the situation and notation of \Cref{tautological_families}, we call $\Pi^G\colon\mathcal{T}^G\rightarrow Y\times V^G\rightarrow V^G$ the $G$-equivariant tautological family associated to $f\colon X\rightarrow Y$, resp. to $\eta_G\in H_G^1(Y,\mathcal{W}^{mer})$.
\end{definition}
\begin{construction}\label{construction_nice_equivariant_fibrations_and_tautological_families}
    Let $X$ be a compact K\"ahler manifold that is bimeromorphic to the total space of an elliptic fibration $f_0\colon X_0\rightarrow Y_0$ over a smooth projective variety $Y_0$. Denote by $\Delta_0\subset Y_0$ the discriminant divisor. Choose a log-resolution $(Y_1,\Delta_1)\rightarrow (Y_0,\Delta_0)$, and choose a resolution of singularities $X_1\rightarrow X_0\times_{Y}Y_0$ that is an isomorphism over the smooth locus of $X_0\times_{Y_0}Y_1\rightarrow Y_1$. Then the induced morphism
    \begin{equation*}
        f_1\colon X_1\longrightarrow X_0\times_{Y_0}Y_1\longrightarrow Y_1
    \end{equation*}
    is an elliptic fibration, which is smooth outside the normal crossing divisor $\Delta_1\subset Y_1$. By construction, $X_1$ is K\"ahler. Thus, by \cite[][Thm. 3.3.3]{Nakayama_elliptic-fibrations}, $f_1$ is locally projective. Therefore, we can apply \cite[][Prop. 3.11]{Claudon_Hoering_Lin_fundamental_group} to conclude that there exists a finite Galois cover $Y'\rightarrow Y_1$ by some projective manifold $Y'$ with Galois group $G$ such that the elliptic fibration
    \begin{equation*}
        f'\colon X'\longrightarrow Y'
    \end{equation*}
    from the normalization $X'$ of $X_1\times_{Y_1}Y'$ to $Y'$ is $G$-equivariant, has local meromorphic sections over every point in $Y'$ and is smooth outside a normal crossing divisor $\Delta'\subset Y'$. By \Cref{existence_unique_eq_Weierstrass_model} there exists a unique minimal Weierstraß model $p\colon W(\mathcal{L},\alpha,\beta)\rightarrow Y'$ smooth over $Y'\setminus\Delta'$ that induces the same variation of Hodge structures as $f'$ over $Y'\setminus\Delta'$. By \Cref{tautological_models} there exists a tautological model $g\colon T\rightarrow Y'$ of $f'$. Furthermore, by \Cref{tautological_families} the tautological model $g\colon T\rightarrow Y'$ fits into the tautological family associated to $f'$
    \begin{equation*}
        \Pi\colon\mathcal{T}\longrightarrow Y'\times V\longrightarrow V
    \end{equation*}
    as the central fiber, i.e. $T\cong\mathcal{T}_0:=\Pi^{-1}(0)$, where $V:=H^1(Y',\mathcal{L})$. The restriction of this family to the $G$-invariant subspace $V^G\subset V$ is $G$-equivariant by \Cref{tautological_families}. We can therefore take the quotient to obtain the family
    \begin{equation*}
        \Pi^G\colon\mathcal{T}^G\stackrel{q^G}{\longrightarrow} Y'\times V^G\longrightarrow V^G.
    \end{equation*}
\end{construction}
\begin{theorem}\label{theorem_approximation}\cite[][Proof of Thm. 1.2]{Lin_algebraic_approximation_codim_1}
    In the notation of \Cref{construction_nice_equivariant_fibrations_and_tautological_families}, there exists a linear subspace $V'\subset V^G$ and a commutative diagram
    \[\begin{tikzcd}
	   {\mathcal{X}} && {\mathcal{T}^G/G} \\
	   & {V'}
	   \arrow["\sim", dashed, from=1-1, to=1-3]
	   \arrow["\pi"', from=1-1, to=2-2]
	   \arrow["{\Pi^G/G}", from=1-3, to=2-2]
    \end{tikzcd}\]
    such that
    \begin{enumerate}[label=(\roman*)]
        \item $\pi\colon\mathcal{X}\rightarrow V'$ is an algebraic approximation of $X$, i.e. $\pi$ is a smooth and proper morphism such that $\pi^{-1}(0)\cong X$ and the subset
        \begin{equation*}
            A:=\{a\in V'\vert\pi^{-1}(a)\;\mathrm{is\;algebraic}\}\subset V'
        \end{equation*}
        is dense in the euclidean topology.
        \item The family $\mathcal{X}$ is bimeromorphic over $V'$ to $\mathcal{T}/G$.
    \end{enumerate}
\end{theorem}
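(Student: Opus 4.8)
The plan is to obtain $\pi\colon\mathcal{X}\to V'$ from the $G$-equivariant tautological family $\Pi^G/G\colon\mathcal{T}^G/G\to V^G$ of \Cref{tautological_families} by a simultaneous resolution over a suitable linear subspace $V'\subset V^G$, and then to establish the density of the algebraic fibers by a Hodge-theoretic period argument. Recall from \Cref{construction_nice_equivariant_fibrations_and_tautological_families} that the central fiber $\mathcal{T}^G_0/G=T/G$ is bimeromorphic to $X$, since $g\colon T\to Y'$ is $G$-equivariantly bimeromorphic to $f'\colon X'\to Y'$ and $X'/G$ is bimeromorphic to $X$.

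First I would exploit that $\Pi^G$ is $G$-equivariantly locally trivial over $Y'$ by \Cref{tautological_families}: there is an open cover $\{U_i\}$ of $Y'$ over which $\mathcal{T}^G$ is, $G$-equivariantly, the product of the central fiber with $V^G$. In particular every fiber $\mathcal{T}^G_v$ has the same singularity type along the fixed discriminant $\Delta'$, so that resolving the central fiber once and spreading the resolution out along the product directions produces a simultaneous resolution of the whole family. Since $G$ acts trivially on the invariant subspace $V^G$, the quotient base is $(Y'/G)\times V^G$, and after descending the resolution to the quotient I obtain a smooth proper family whose fibers are the smooth models of the elliptic fibrations $\mathcal{T}^G_v/G\to Y'/G$.

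Next I would cut down to a linear subspace $V'\subset V^G$ containing $0$ in order to pin down the central fiber. The given bimeromorphic map $T/G\dashrightarrow X$ dictates a modification of the simultaneous resolution whose central fiber is the prescribed smooth K\"ahler manifold $X$; extending this modification across the family while keeping the projection smooth and proper is what forces the passage to a subspace $V'$. Over $V'$ this yields the smooth proper morphism $\pi\colon\mathcal{X}\to V'$ with $\mathcal{X}_0\cong X$, together with a bimeromorphic map $\mathcal{X}\dashrightarrow\mathcal{T}^G/G$ over $V'$ that is an isomorphism wherever both families are smooth; properness is inherited from the projective fibrations $\mathcal{T}^G_v\to Y'$ over the projective base. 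This already produces the commutative diagram of the statement and its property (ii).

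The hard part will be the density of the algebraic locus $A\subset V'$, which is the real content of algebraic approximation. By \Cref{tautological_families} the fiber $\mathcal{X}_v$ is bimeromorphic to the elliptic fibration over $Y'/G$ attached to the class $\eta_G+\exp'_G(v)$, and the exponential sequence $0\to j_*H_\Z\to\mathcal{L}\to\mathcal{W}^\#\to 0$ of \Cref{bimeromorphic_classification_elliptic_fibrations} shows that the Chern class $c(\eta_G+\exp'_G(v))=c(\eta_G)$ is independent of $v$, because $\exp'_G(v)$ lies in the image of $\exp$, on which the Chern class map vanishes. As $\mathcal{X}_0\cong X$ is K\"ahler, \Cref{Weierstrass_model_Kaehler} forces $c(\eta_G)$ to be torsion, so every $\mathcal{X}_v$ is again bimeromorphic to a compact K\"ahler manifold; being smooth and K\"ahler, it is algebraic exactly when it is projective. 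Two parameters with $v-v'\in\im(H^1(Y',j_*H_\Z)\to H^1(Y',\mathcal{L}))$ define the same fibration, and a torsion period produces a multisection over the smooth locus, hence a finite \'etale splitting as in \Cref{fundamental_properties_special_torus_bundles}\cref{torus_bundle_multisection}, which makes the total space projective over the projective base $Y'/G$; thus $\mathcal{X}_v$ is projective whenever $v$ lies in the $\Q$-span $\Lambda_\Q$ of $\im(H^1(Y',j_*H_\Z)\to H^1(Y',\mathcal{L}))$. It then remains to prove that $\Lambda_\Q$ is dense: writing $\mathcal{L}\cong R^1p_*\OO_{W}$, which over $Y'\setminus\Delta'$ is $\mathcal{H}^{0,1}=\mathcal{H}/\mathcal{F}^1$, the Hodge decomposition $\mathcal{H}=\mathcal{F}^1\oplus\overline{\mathcal{F}^1}$ realizes the projection $j_*H_{\mathbb{R}}\to\mathcal{H}^{0,1}$ as a fiberwise $\mathbb{R}$-linear isomorphism, so the real classes span $H^1(Y',\mathcal{L})$ over $\mathbb{R}$ and $\Lambda_\Q$ is euclidean-dense in $V$; since the whole construction is $G$-equivariant and $\Lambda_\Q$ is a $G$-stable $\Q$-subspace, averaging a nearby rational class over $G$ shows that $\Lambda_\Q\cap V^G$ is dense in $V^G$, and choosing $V'$ compatibly with this rational structure makes $A\supset V'\cap\Lambda_\Q$ dense in $V'$. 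The crux is precisely this last step: turning the rationality of the period into genuine projectivity of the smooth K\"ahler fibers and verifying the density of $G$-invariant rational periods, all while arranging that the resolution over $V'$ keeps the central fiber equal to $X$.
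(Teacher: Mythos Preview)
The paper does not give its own proof of this theorem; it is quoted verbatim from Lin's work (the reference in the theorem header is to \emph{Proof of Thm.~1.2} in \cite{Lin_algebraic_approximation_codim_1}), so there is no argument in the paper to compare against. Your sketch is broadly in the spirit of Lin's proof---simultaneous resolution made possible by the local triviality of the tautological family over $Y'$, followed by a period/lattice argument for density of the projective fibers---and the overall architecture is right.

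That said, two steps in your density argument are not correct as written. First, you invoke \Cref{fundamental_properties_special_torus_bundles}\cref{torus_bundle_multisection} to pass from ``torsion twist'' to ``finite \'etale splitting'' and hence projectivity, but that proposition only concerns the very special situation $(Y\times E)^\eta$ with trivial monodromy; at this point of the argument nothing is known about the monodromy or isotriviality of $f'$ (indeed, in the paper this is only established \emph{later}, using the algebraic approximation). The correct mechanism in Lin's argument is that a torsion class in $H^1(Y',\mathcal{W}^{mer})$ yields a meromorphic multisection, which forces the total space to be Moishezon over the projective base $Y'$, and Moishezon plus K\"ahler gives projectivity. Second, your description $A\supset V'\cap\Lambda_\Q$ is off by a translate: what matters is that $\eta_G+\exp'_G(v)$ be torsion, and since only a multiple $N\eta_G$ is known to lie in the image of $\exp$ (say $N\eta_G=\exp(\xi_0)$), the projective fibers correspond to $v$ with $\xi_0+Nv\in\Lambda_\Q$, i.e.\ to a \emph{coset} of $\tfrac{1}{N}\Lambda_\Q$, not to $\Lambda_\Q$ itself. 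The density conclusion survives, but the set you wrote down is not the right one. Finally, your real-span step (``$j_*H_{\mathbb R}\to\mathcal{L}$ is fiberwise an $\mathbb R$-isomorphism, hence the image spans $H^1$ over $\mathbb R$'') is the crux and deserves more than a one-line justification; in Lin's paper this is exactly where the Hodge theory of $R^1p_*\OO_W$ is used in earnest.
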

\subsection{Equivariant elliptic fiber bundles with trivial monodromy}
\begin{proposition}\label{trivial_vhs_constant_j_all_invariant}\cite[][Prop. 2.22]{Pietig_1-forms_Kaehler_threefolds}
    Let $f\colon X\rightarrow Y$ be an elliptic fibration between compact connected complex manifolds, which has local meromorphic sections over every point of $Y$ and is smooth outside a normal crossing divisor $\Delta\subset Y$. Suppose that a finite group $G$ acts on $X$ and $Y$ such that $f$ is $G$-equivariant. Suppose that the local system $\Lambda:=R^1f_*\Z\vert_{Y^*}$ of the variation of Hodge structures induced by $f$ is $G$-equivariantly isomorphic to $\Z^2$, where $\Z^2$ is endowed with the trivial $G$-action, and that the smooth fibers of $f$ are isomorphic to $\C/\Lambda$. Then there is a cohomology class $\eta\in H^1(Y,\OO_Y/\Lambda)^G$ and a group homomorphism $\sigma\colon G\rightarrow E$ such that $f\colon X\rightarrow Y$ is $G$-equivariantly bimeromorphic to $\pi^\eta\colon(Y\times E)^\eta\rightarrow Y$, where $G$ acts diagonally: on $Y$ by the given action and on $E$ by translations with $\sigma(g)$. Moreover, if $X$ and $Y$ are K\"ahler, then $(Y\times E)^\eta$ is also K\"ahler.
\end{proposition}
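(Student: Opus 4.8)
The plan is to prove the statement first non-equivariantly, then to upgrade it by equivariant cohomology, and finally to settle the K\"ahler claim. Throughout I write $E=\C/\Lambda$ for the fiber.

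First I would forget the $G$-action. By hypothesis $f$ is isotrivial with fiber $E$ and has trivial monodromy, so \Cref{corollary_isotrivial_and_trivial_vhs} identifies its Jacobian fibration with the product $J_H=Y\times E$ and gives $\mathcal{J}_H\cong\OO_Y/\Lambda$. Over the smooth locus $Y^*=Y\setminus\Delta$ the fibration $f$ is a smooth elliptic fibration, and since a meromorphic section of a torsor under an abelian group scheme is automatically holomorphic there, $f|_{Y^*}$ is a torsor under $J_H|_{Y^*}$, hence is classified by a class $\eta^*\in H^1(Y^*,\OO_{Y^*}/\Lambda)$. Invoking the bimeromorphic classification of elliptic fibrations with local meromorphic sections (\Cref{bimeromorphic_classification_elliptic_fibrations}), together with the description of torus bundles in \Cref{definition_isotrivial_and_trivial_vhs}, this class extends to some $\eta\in H^1(Y,\OO_Y/\Lambda)$ and $f$ is bimeromorphic over $Y$ to $\pi^\eta\colon(Y\times E)^\eta\to Y$. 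The extension of $\eta^*$ across $\Delta$ is exactly where the classification is used.

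Next I would bring back $G$. The $G$-action on $X$ induces, by functoriality of the data in \Cref{def_variation_HS_Jacobian_fibration}, a $G$-action on $J_H=Y\times E$ preserving the zero section $\sigma_0$, hence acting fiberwise by group automorphisms over the given action on $Y$. Because $G$ acts trivially on the local system $\Lambda=R^1f_*\Z|_{Y^*}$ and because $\operatorname{Aut}(E,0)\hookrightarrow\mathrm{GL}(\Lambda)$ is faithful, this fiberwise action is trivial, so $g\cdot(y,e)=(g\cdot y,e)$; consequently the induced $G$-action on the torsor $(Y\times E)^\eta$ is fiberwise by translations. To organize these translations I pass to equivariant cohomology: the $G$-equivariant torsor is classified by a class $\eta_G\in H^1_G(Y,\OO_Y/\Lambda)$, and the five-term exact sequence of the Cartan--Leray spectral sequence reads
\begin{equation*}
0\to H^1(G,H^0(Y,\OO_Y/\Lambda))\to H^1_G(Y,\OO_Y/\Lambda)\xrightarrow{r} H^1(Y,\OO_Y/\Lambda)^G\to H^2(G,H^0(Y,\OO_Y/\Lambda)).
\end{equation*}
Since $Y$ is compact and connected, $H^0(Y,\OO_Y/\Lambda)=\C/\Lambda=E$ with trivial $G$-action, so $\ker r=H^1(G,E)=\Hom(G,E)$. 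The image $\eta:=r(\eta_G)$ is the desired $G$-invariant class in $H^1(Y,\OO_Y/\Lambda)^G$; its invariance is what makes the diagonal action $g\cdot(y,e)=(g\cdot y,e)$ on $(Y\times E)^\eta$ well defined, and the lift $\eta_G$ differs from this $\sigma=0$ structure by a unique $\sigma\in\Hom(G,E)$, realizing $\eta_G$ as $(Y\times E)^\eta$ with the diagonal action translating $E$ by $\sigma(g)$. Finally, the injectivity of $\mathcal{E}_G(Y,\Delta,H)\hookrightarrow H^1_G(Y,\mathcal{W}^{mer})$ from \Cref{bimeromorphic_classification_elliptic_fibrations} upgrades this equality of equivariant classes to a $G$-equivariant bimeromorphism $f\sim\pi^\eta$.

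For the K\"ahler claim, if $X$ and $Y$ are K\"ahler then $(Y\times E)^\eta$ is bimeromorphic to the compact K\"ahler manifold $X$; being a smooth torus bundle, its K\"ahlerness is equivalent to the torsion of $c(\eta)$ by \Cref{fundamental_properties_special_torus_bundles}\cref{torus_bundle_kaehler}, and this torsion is detected bimeromorphically from $X$ via \Cref{Weierstrass_model_Kaehler}. I expect the main obstacle to be the equivariant bookkeeping of the third step: producing a genuine homomorphism $\sigma$ rather than a merely $y$-dependent translation or a crossed homomorphism, and guaranteeing that the bimeromorphism stays $G$-equivariant across the singular locus $\Delta$. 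This is precisely what forces the use of the Cartan--Leray sequence and of the faithfulness of $\operatorname{Aut}(E,0)$ on $\Lambda$, combined with the injectivity in the equivariant bimeromorphic classification.
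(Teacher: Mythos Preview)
This proposition is not proved in the present paper---it is quoted from \cite[Prop.~2.22]{Pietig_1-forms_Kaehler_threefolds}---so there is no in-paper argument to compare against. Your architecture is the natural one and presumably matches the cited source: identify $\mathcal{W}^{\#}=\mathcal{W}^{mer}\cong\OO_Y/\Lambda$ so that the equivariant classification of \Cref{bimeromorphic_classification_elliptic_fibrations} lands directly in $H^1_G(Y,\OO_Y/\Lambda)$, unpack this group via the Hochschild--Serre five-term sequence, and read off K\"ahlerness from \Cref{Weierstrass_model_Kaehler}. The use of the faithfulness of $\operatorname{Aut}(E,0)\hookrightarrow\mathrm{GL}(\Lambda)$ to force the fibrewise $G$-action on the Jacobian to be trivial is exactly the right mechanism.

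There is, however, a real gap in your spectral-sequence step. You assert that compactness and connectedness of $Y$ give $H^0(Y,\OO_Y/\Lambda)=E$, but a global section of $\OO_Y/\Lambda$ is nothing other than a holomorphic map $Y\to E$, and such maps need not be constant: they are parametrised by $\Hom(\mathrm{Alb}(Y),E)\times E$. Consequently $\ker r=H^1\bigl(G,\mathrm{Mor}(Y,E)\bigr)$ with $G$ acting on $\mathrm{Mor}(Y,E)$ by precomposition, and the translation you extract is a priori only a crossed homomorphism $G\to\mathrm{Mor}(Y,E)$ rather than a homomorphism $G\to E$; concretely, the $G$-action on the torsor could take the form $g\cdot(y,e)=(gy,\,e+\tau_g(y))$ with $\tau_g$ genuinely depending on $y$. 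In the application inside the proof of \Cref{precise_main_result} the base is of general type, so $\mathrm{Mor}(Y,E)=E$ and everything you wrote goes through verbatim; but since the proposition is stated for arbitrary compact $Y$, you should either supply the missing reduction to constant $\sigma$ (perhaps by absorbing the non-constant part of $\tau_g$ into a modification of $\eta$) or explicitly record the additional hypothesis $\mathrm{Mor}(Y,E)=E$.
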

\section{Deformations}
Let $X$ be a compact K\"ahler manifold. A deformation of $X$ is a smooth proper morphism $\pi\colon\mathcal{X}\rightarrow B$ of complex manifolds, where $B$ has a distinguished base point $0\in B$ such that the fiber $\mathcal{X}_0:=\pi^{-1}(0)$ is isomorphic to $X$.
\begin{proposition}\label{deformations_no_zeros}
    Let $X$ be a compact connected K\"ahler manifold that admits a holomorphic 1-form $\omega\in H^0(X,\Omega^1_X)$ without zeros, and let $\pi\colon\mathcal{X}\rightarrow B$ be a deformation of $\mathcal{X}_0\cong X$. Then, after shrinking $B$, there exists a holomorphic 1-form $\tilde{\omega}\in H^0(\mathcal{X},\Omega^1_{\mathcal{X}/B})$ such that the isomorphism $\mathcal{X}_0\cong X$ sends $\tilde{\omega}\vert_{\mathcal{X}_0}$ to $\omega$. In particular, after shrinking $B$ further if necessary, all fibers $\mathcal{X}_b$ admit a holomorphic 1-form without zeros.
\end{proposition}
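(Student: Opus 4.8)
The plan is to first extend $\omega$ to a relative holomorphic $1$-form over a neighborhood of $0$ by a base-change argument, and then to deduce the zero-free property for nearby fibers from properness. As a preliminary reduction, I would arrange that every fiber is K\"ahler: since $\mathcal{X}_0\cong X$ is a compact K\"ahler manifold and $\pi$ is smooth and proper, the stability theorem of Kodaira--Spencer guarantees that, after shrinking $B$ around $0$, each fiber $\mathcal{X}_b$ is again K\"ahler. I would also shrink $B$ to a contractible neighborhood, so that by Ehresmann's theorem $\pi$ is a locally trivial $C^\infty$-fibration and all fibers are diffeomorphic to $X$.

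The heart of the matter is to show that the coherent sheaf $\pi_*\Omega^1_{\mathcal{X}/B}$ is locally free and commutes with base change. Because $\pi$ is smooth, $\Omega^1_{\mathcal{X}/B}$ is locally free on $\mathcal{X}$ and flat over $B$, so Grauert's theorem applies once I know that $b\mapsto h^0(\mathcal{X}_b,\Omega^1_{\mathcal{X}_b})=h^{1,0}(\mathcal{X}_b)$ is constant. This constancy is exactly where the K\"ahler hypothesis is used: by degeneration of the Fr\"olicher spectral sequence on each compact K\"ahler fiber one has $\sum_{p+q=n}h^{p,q}(\mathcal{X}_b)=b_n(\mathcal{X}_b)$, the right-hand side is a topological invariant and hence constant in the family by the previous paragraph, and each $h^{p,q}$ is upper semicontinuous by Grauert; an upper semicontinuous family of integers with constant sum must be individually constant. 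Therefore $h^{1,0}(\mathcal{X}_b)=h^{1,0}(X)$ for all $b$. I expect this to be the main obstacle, in the sense that it is the one step genuinely requiring the hypotheses: for a general non-K\"ahler family $h^{1,0}$ can jump, and then $\omega$ need not extend at all.

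With constancy in hand, Grauert's base-change theorem shows that $\pi_*\Omega^1_{\mathcal{X}/B}$ is locally free and that the natural restriction map $(\pi_*\Omega^1_{\mathcal{X}/B})\otimes k(b)\to H^0(\mathcal{X}_b,\Omega^1_{\mathcal{X}_b})$ is an isomorphism for every $b$. Shrinking $B$ so that this locally free sheaf is in fact free, the given form $\omega\in H^0(\mathcal{X}_0,\Omega^1_{\mathcal{X}_0})$ lifts to a global section $\tilde{\omega}\in H^0(B,\pi_*\Omega^1_{\mathcal{X}/B})=H^0(\mathcal{X},\Omega^1_{\mathcal{X}/B})$ whose restriction to $\mathcal{X}_0$ equals $\omega$ under the isomorphism $\mathcal{X}_0\cong X$. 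This establishes the first assertion of the proposition.

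Finally, for the zero-free conclusion I would consider the vanishing locus $Z:=\{x\in\mathcal{X}\mid\tilde{\omega}(x)=0\}$, which is a closed analytic subset of $\mathcal{X}$, being the zero set of a section of the vector bundle $\Omega^1_{\mathcal{X}/B}$. Since $\omega$ has no zeros, $Z\cap\mathcal{X}_0=\varnothing$, so $0\notin\pi(Z)$; and since $\pi$ is proper, $\pi(Z)$ is a closed subset of $B$. Shrinking $B$ once more to a neighborhood of $0$ disjoint from $\pi(Z)$ then ensures that $\tilde{\omega}\vert_{\mathcal{X}_b}$ is nowhere vanishing for every $b$, which is the desired statement. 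The only routine points left to verify are the flatness input for Grauert's theorem and the standard fact that the vanishing locus of a vector-bundle section is analytic, neither of which presents a difficulty.
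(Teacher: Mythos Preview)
Your proposal is correct and follows essentially the same route as the paper: constancy of $h^{1,0}$ via upper semicontinuity plus Hodge theory, then Grauert's base-change theorem to lift $\omega$, then the properness argument on the zero locus of $\tilde{\omega}$. The only cosmetic differences are that the paper works directly on a polydisc and uses just the identity $h^{1,0}+h^{0,1}=b_1$ (rather than the full Fr\"olicher degeneration), and that you are more explicit about invoking Kodaira--Spencer stability to ensure nearby fibers remain K\"ahler---a point the paper uses implicitly.
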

\begin{proof}
    We can assume without loss of generality that $B$ is a polydisc centered around $0\in\C^n$. Let $I$ be the ideal sheaf of $0\in B$. Tensoring the short exact sequence $0\rightarrow I\rightarrow\OO_B\rightarrow\C_0\rightarrow 0$ with $\pi_*\Omega^1_{\mathcal{X}/B}$, yields the exact sequence
    \begin{equation*}
        0\longrightarrow\pi_*\Omega^1_{\mathcal{X}/B}\otimes I\longrightarrow\pi_*\Omega^1_{\mathcal{X}/B}\longrightarrow\pi_*\Omega^1_{\mathcal{X}/B}\otimes\C_0\longrightarrow 0.
    \end{equation*}
    As $B$ is a polydisc, $H^1(B,\Omega^1_{\mathcal{X}/B})$ vanishes. Thus, the induced map
    \begin{equation*}
        H^0(B,\Omega^1_{\mathcal{X}/B})\longrightarrow \pi_*\Omega^1_{\mathcal{X}/B}\otimes\C_0
    \end{equation*}
    is surjective. By the semicontinuity theorem \cite[][Thm. 12.8]{Hartshorne_AG}, the functions
    \begin{align*}
        \varphi\colon B&\longrightarrow\Z\\
        b&\longmapsto\dim H^0(\mathcal{X}_b,\Omega_{\mathcal{X}_b})\\
    \end{align*}
    \begin{align*}
        \psi\colon B&\longrightarrow\Z\\
        b&\longmapsto\dim H^1(\mathcal{X}_b,\OO_{\mathcal{X}_b})
    \end{align*}
    are upper semi-continuous. By the Hodge decomposition, the sum $\varphi+\psi$ is equal to the function $b\mapsto\dim H^1(\mathcal{X}_b,\C)$, which is constant. Therefore, also $\varphi$ and $\psi$ must be constant. Using that $\varphi$ is constant, Grauert's theorem on proper base change \cite[][Cor. 12.9]{Hartshorne_AG} implies that there natural map
    \begin{equation*}
        \pi_*\Omega^1_{\mathcal{X}/B}\otimes\C_0\stackrel{\sim}{\longrightarrow} H^0(\mathcal{X}_0,\Omega^1_{\mathcal{X}_0})
    \end{equation*}
    is an isomorphism. Therefore, there is a holomorphic 1-form $\tilde{\omega}\in H^0(\mathcal{X},\Omega^1_{\mathcal{X}/B})$ such that the composition
    \begin{equation*}
        H^0(B,\Omega^1_{\mathcal{X}/B})\longrightarrow \pi_*\Omega^1_{\mathcal{X}/B}\otimes\C_0\cong H^0(\mathcal{X}_0,\Omega^1_{\mathcal{X}_0})
    \end{equation*}
    sends $\tilde{\omega}$ to $\omega$. Let $Z\subset\mathcal{X}$ be the zero locus of $\tilde{\omega}$. This is a closed subvariety, which does not contain $\mathcal{X}_0$ by assumption. As $\pi$ is proper, $\pi(Z)\subset B$ is a closed subset that does not contain $0\in B$. In particular, for all $b\in B\setminus\pi(Z)$, the fiber $\mathcal{X}_b$ admits a holomorphic 1-form without zeros. Therefore, shrinking $B$ to $B\setminus\pi(Z)$ yields the desired result.
\end{proof}
\section{Proof of the Main Result}
\begin{theorem}\label{precise_main_result}
    Let $X$ be a compact K\"ahler manifold of Kodaira dimension $\kappa(X)=\dim X-1$ that admits a holomorphic 1-form without zeros. Then there is a smooth projective variety $Y$ of general type and of dimension $\dim Y=\dim X-1$, an elliptic curve $E=\C/\Lambda$, a cohomology class $\eta\in H^1(Y,\OO_Y/\Lambda)$ and a finite \'etale cover $X'\rightarrow X$ such that $X'$ is bimeromorphic to $(Y\times E)^\eta$.
\end{theorem}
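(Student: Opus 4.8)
The plan is to follow the strategy outlined in the introduction: deform $X$ into a projective manifold where Hao's theorem applies, transport the resulting isotriviality back to $X$ through the tautological family, and finally untwist the equivariant torus‑bundle structure by a finite \'etale cover. First I would reduce to the Iitaka fibration. Since $\kappa(X)=\dim X-1$, the general fibre of the Iitaka fibration has dimension $1$ and Kodaira dimension $0$, hence is an elliptic curve, and the base $Y_0$ is a smooth projective variety of general type with $\dim Y_0=\dim X-1$. Thus $X$ is bimeromorphic to the total space of an elliptic fibration over $Y_0$, and I feed this datum into \Cref{construction_nice_equivariant_fibrations_and_tautological_families}. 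This yields a log resolution $Y_1\to Y_0$, a finite Galois cover $Y'\to Y_1$ with group $G$, a $G$-equivariant minimal Weierstra\ss\ model, a tautological model $g\colon T\to Y'$, and the $G$-equivariant tautological family $\Pi^G\colon\mathcal T^G\to Y'\times V^G\to V^G$ with central fibre $T\cong\Pi^{-1}(0)$. By \Cref{theorem_approximation} there is a linear subspace $V'\subset V^G$ and an algebraic approximation $\pi\colon\mathcal X\to V'$ of $X$ that is bimeromorphic over $V'$ to $\mathcal T^G/G$; in particular $X\sim T/G$ and every fibre $\mathcal X_v\sim\Pi^{-1}(v)/G$ is an elliptic fibration over $Y_1=Y'/G$.

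Next I would locate a good projective fibre. By \Cref{deformations_no_zeros}, after shrinking $V'$ every fibre $\mathcal X_v$ carries a holomorphic $1$-form without zeros, and by \Cref{theorem_approximation} the projective fibres are dense, so I may fix $v_0\in V'$ with $\mathcal X_{v_0}$ projective and carrying a nowhere vanishing $1$-form. Since $\mathcal X_{v_0}$ is bimeromorphic to an elliptic fibration over the general type base $Y_1$ of dimension $\dim X-1$, one has $\kappa(\mathcal X_{v_0})=\dim X-1$, so \Cref{Hao_Kodaira_codim_1} applies: the Iitaka fibration of $\mathcal X_{v_0}$, which is bimeromorphic to $p_{v_0}/G\colon\Pi^{-1}(v_0)/G\to Y_1$, is isotrivial with fibre a fixed elliptic curve $E$, and so the $j$-invariant of $p_{v_0}\colon\Pi^{-1}(v_0)\to Y'$ is constant. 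The key structural input is now \Cref{tautological_families}, whose part~(ii) asserts that $\Pi^G$ is $G$-equivariantly locally trivial over $Y'$; hence the $j$-invariant function of $p_v\colon\Pi^{-1}(v)\to Y'$ is independent of $v\in V^G$. Comparing $v=v_0$ with $v=0$ shows that the central fibration $p_0\colon T\to Y'$ is itself isotrivial with constant fibre $E$.

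It remains to analyse the $G$-equivariant fibration $p_0\colon T\to Y'$, for which $X\sim T/G$. An isotrivial elliptic fibration has finite monodromy valued in $\mathrm{Aut}(E,0)$; passing to a further finite Galois cover $\hat Y\to Y_1$ refining $Y'\to Y_1$, with Galois group $\hat G$, and to the pulled‑back central fibration $\hat p_0\colon\hat T\to\hat Y$, I would arrange that $\hat G$ acts trivially on the local system $\Lambda:=R^1\hat p_{0*}\Z|_{\hat Y^*}$. Since trivial monodromy forces the discriminant to be empty, $\hat p_0$ is then smooth over all of $\hat Y$ with fibre $\C/\Lambda\cong E$, and \Cref{trivial_vhs_constant_j_all_invariant} applies: $\hat p_0$ becomes $\hat G$-equivariantly bimeromorphic to $\pi^\eta\colon(\hat Y\times E)^\eta\to\hat Y$ for some $\eta\in H^1(\hat Y,\OO_{\hat Y}/\Lambda)^{\hat G}$ and some homomorphism $\sigma\colon\hat G\to E$, where $\hat G$ acts diagonally, by its given action on $\hat Y$ and by the translations $\sigma(g)$ on $E$; moreover $(\hat Y\times E)^\eta$ is K\"ahler. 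Since $\hat Y/\hat G=Y_1$, it follows that $X$ is bimeromorphic to $(\hat Y\times E)^\eta/\hat G$.

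Finally I would untwist this quotient. Because $\hat G$ acts on the $E$-factor by translations, a point $(\hat y,e)$ is fixed by $g\in\hat G$ if and only if $g\hat y=\hat y$ and $\sigma(g)=0$, so only elements of $K:=\ker\sigma$ can act with a fixed point. Assuming $K$ acts freely on $\hat Y$, the quotient $Y:=\hat Y/K$ is a smooth projective variety of general type with $\dim Y=\dim X-1$; as $K$ acts trivially on $E$, the space $(\hat Y\times E)^\eta/K$ is again a trivial‑monodromy torus bundle $(Y\times E)^{\eta'}$ with $\eta'\in H^1(Y,\OO_Y/\Lambda)$; and the residual group $\hat G/K\cong\sigma(\hat G)\hookrightarrow E$ acts on it by nonzero translations on fibres, hence freely. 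Therefore $(Y\times E)^{\eta'}\to (Y\times E)^{\eta'}/(\hat G/K)$ is finite \'etale onto a compact K\"ahler manifold bimeromorphic to $X$, and transporting this cover along the bimeromorphic invariance of the fundamental group produces the desired finite \'etale cover $X'\to X$ with $X'$ bimeromorphic to $(Y\times E)^{\eta'}$. I expect the main obstacle to be exactly this last equivariant bookkeeping: arranging, through the choice of cover and of the normal subgroup $K$ (the \emph{subquotient} of the introduction), both that $\hat G$ acts trivially on $\Lambda$ so that \Cref{trivial_vhs_constant_j_all_invariant} becomes applicable, and that the subgroup $K$ acting trivially on $E$ acts freely on $\hat Y$. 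Here the nowhere vanishing $1$-form, which must restrict nontrivially to the elliptic fibres, should be what excludes the multiple fibres that would otherwise break the freeness of this action.
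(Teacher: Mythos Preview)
Your overall strategy---deform via the tautological family, apply Hao at a projective fibre, transport isotriviality back, then untwist the $G$-action---matches the paper exactly, and the first three quarters of your outline are essentially what the paper does. The divergence, and the genuine gap, is in the final untwisting step.

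You pass to a further Galois cover $\hat Y\to Y_1$ to trivialise the monodromy, and then you \emph{assume} that $K=\ker\sigma$ acts freely on $\hat Y$, conjecturing that the nowhere vanishing $1$-form on $X$ rules out multiple fibres and hence fixed points. This is not how the freeness is obtained, and in fact the $1$-form is never used again after the algebraic approximation step. The paper's mechanism is different and purely structural. Once the general fibre of the Weierstra\ss\ model over $Y'$ is the fixed curve $E$, uniqueness of minimal Weierstra\ss\ models forces $W(\mathcal L,\alpha,\beta)\cong Y'\times E$; in particular $\mathcal L\cong\mathcal O_{Y'}$, the discriminant is empty, and $\mathcal W^{\#}=\mathcal W^{mer}\cong\mathcal O_{Y'}/\Lambda$. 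Two consequences follow. First, no further cover $\hat Y$ is needed: the monodromy on $Y'$ is already trivial, and the class $\eta_G$ already lives in $H^1_G(Y',\mathcal O_{Y'}/\Lambda)$, so $f'$ is $G$-equivariantly bimeromorphic to $(Y'\times E)^{\eta_G}$. Second, and this is the point you are missing, the $G$-equivariant structure on $\mathcal L\cong\mathcal O_{Y'}$ is governed by a character $\rho\colon G\to\mathbb C^\times$, and $G_1:=\ker\rho$ is exactly the subgroup acting trivially on the variation of Hodge structures. The quotient of $\mathcal O_{Y'}$ by $G$ is a torsion line bundle $\mathcal M$ on $Y_1$, and $Y'/G_1$ is identified with the cyclic cover $\mathrm{Spec}\bigl(\bigoplus_{i=0}^{n-1}\mathcal M^i\bigr)\to Y_1$, which is \'etale because $\mathcal M$ is torsion. \'Etaleness of $Y'/G_1\to Y_1$ forces every isotropy subgroup of $G$ on $Y'$ to inject into $G/G_1$, hence $G_1$ acts freely on $Y'$; a fortiori so does $G_2:=\ker(\sigma\colon G_1\to E)\subset G_1$, so $Y:=Y'/G_2$ is smooth. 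The two \'etale covers in the conclusion then come from $Y'/G_1\to Y_1$ and from the free action of $G_1/G_2\hookrightarrow E$ by nonzero translations---not from any direct appeal to the $1$-form.

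A minor related point: your assertion that ``trivial monodromy forces the discriminant to be empty'' is correct for the minimal Weierstra\ss\ model (and this is how the paper uses it), but not for the tautological model $\hat p_0$ itself; fortunately \Cref{trivial_vhs_constant_j_all_invariant} does not require smoothness, only local meromorphic sections and an SNC discriminant, so this does not break your argument, but it is worth stating accurately.
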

\begin{proof}
    First, we claim that $X$ is bimeromorphic to the total space of an elliptic fibration over a projective base. Note that $X$ has algebraic dimension $a(X)$ in the range $\dim X-1\leq a(X)\leq\dim X$. If $a(X)=\dim X-1$, using algebraic reduction shows that $X$ is bimeromorphic to the total space of an elliptic fibration $f_0\colon X_0\rightarrow Y_0$ over a projective variety $Y_0$, \cite[][Thm. 2.4]{Ueno_classification_theory}. If $a(X)=\dim X$, $X$ is projective by Moishezon's theorem. Thus, $X$ admits a good minimal model $X_0$ by \cite[][Thm. 4.4]{Lai_varieties_fibered_by_gmm}, i.e. $X_0$ is a projective variety, which is $\Q$-factorial, has terminal singularities, the canonical divisor $K_{X_0}$ is nef and a positive multiple $nK_{X_0}$ of $K_{X_0}$ is globally generated. Consider the Iitaka fibration $f_0\colon X_0\rightarrow Y_0$ defined as the Stein factorization of $\vert nK_{X_0}\vert\colon X_0\rightarrow\mathbb{P}^N$, where $N:=\dim H^0(X_0,nK_{X_0})-1$:
    \[\begin{tikzcd}
	   {X_0} && {\mathbb{P}^N} \\
	   & {Y_0}
	   \arrow["{\vert mK_{X_0}\vert}", from=1-1, to=1-3]
	   \arrow["{f_0}"', from=1-1, to=2-2]
	   \arrow[from=2-2, to=1-3]
    \end{tikzcd}\]
    By \cite[][Thm. 5]{Iitaka_fibration}, $f_0$ is an elliptic fibration.\par
    Denote by $\Delta_0\subset Y_0$ the discriminant divisor. Choose a log-resolution $(Y_1,\Delta_1)\rightarrow (Y_0,\Delta_0)$, and choose a resolution of singularities $X_1\rightarrow X_0\times_{Y_0}Y_1$ that is an isomorphism over the smooth locus of $X_0\times_{Y_0}Y_1\rightarrow Y_1$. Then the induced morphism
    \begin{equation*}
        f_1\colon X_1\longrightarrow X_0\times_{Y_0}Y_1\longrightarrow Y_1
    \end{equation*}
    is an elliptic fibration, which is smooth outside the normal crossing divisor $\Delta_1\subset Y_1$. By construction, $X_1$ is K\"ahler. Thus, by \cite[][Thm. 3.3.3]{Nakayama_elliptic-fibrations}, $f_1$ is locally projective. Therefore, we can apply \cite[][Prop. 3.11]{Claudon_Hoering_Lin_fundamental_group} to conclude that there exists a finite Galois cover $Y_2\rightarrow Y_1$ by some projective manifold $Y_2$ with Galois group $G$ such that the elliptic fibration
    \begin{equation*}
        f_2\colon X_2\longrightarrow Y_2
    \end{equation*}
    from the normalization $X_2$ of $X_1\times_{Y_1}Y_2$ to $Y_2$ is $G$-equivariant, has local meromorphic sections over every point in $Y_2$ and is smooth outside a normal crossing divisor $\Delta_2\subset Y_2$.\par
    Denote by $H:=R^1(f_2)_*\Z\vert_{Y^*_2}$ the variation of Hodge structures induced by $f_2$. By \Cref{existence_unique_eq_Weierstrass_model} there exists a unique minimal Weierstraß model $p\colon W(\mathcal{L},\alpha,\beta)\rightarrow Y_2$ smooth over $Y_2\setminus\Delta_2$ that induces the same variation of Hodge structures as $f_2$ over $Y_2\setminus\Delta_2$. Let $\eta_G\in H^1_G(Y_2,\mathcal{W}^{mer})$ be the image of the class of $f_2$ under the map $\mathcal{E}_G(Y_2,\Delta_2,H)\rightarrow H^1_G(Y_2,\mathcal{W}^{mer})$ from \cref{injection_into_H1(Wmer)} above. Then, by \Cref{tautological_models}, there is a positive integer $m$ such that $m\eta_G$ can be lifted to a class $\eta'_G\in H^1_G(Y_2,\mathcal{W}^\#)$, and there is a commutative diagram
    \[\begin{tikzcd}
	    {T} && {W(\mathcal{L},\alpha,\beta)^{\eta'_G}} \\
	    & {Y} && {,}
	    \arrow["m", from=1-1, to=1-3]
	    \arrow["g"', from=1-1, to=2-2]
	    \arrow["p", from=1-3, to=2-2]
    \end{tikzcd}\]
    where $g\colon T\rightarrow Y$ is the tautological model of $f_2$, and $m\colon T\rightarrow W(\mathcal{L},\alpha,\beta)^{\eta'_G}$ is finite of degree $m$, which restricts on smooth fibers to the multiplication by $m$-map.\par
    Furthermore, by \Cref{tautological_families}, the tautological model $g\colon T\rightarrow Y_2$ fits into the tautological family associated to $f_2$:
    \begin{equation*}
        \Pi\colon\mathcal{T}\longrightarrow Y_2\times V\longrightarrow V
    \end{equation*}
    as the central fiber, i.e. $T\cong\mathcal{T}_0:=\Pi^{-1}(0)$, where $V:=H^1(Y_2,\mathcal{L})$. The restriction of this family to the $G$-invariant subspace $V^G\subset V$ is $G$-equivariant by \Cref{tautological_families}. We can therefore take the quotient to obtain the family
    \begin{equation*}
        \Pi^G\colon\mathcal{T}^G\stackrel{q^G}{\longrightarrow} Y_2\times V^G\longrightarrow V^G.
    \end{equation*}
    By \Cref{theorem_approximation}, there exists linear subspace $V'\subset V^G$ and a commutative diagram
    \[\begin{tikzcd}
	   {\mathcal{X}} && {\mathcal{T}^G/G} \\
	   & {V'}
	   \arrow["\sim", dashed, no head, from=1-1, to=1-3]
	   \arrow["\pi"', from=1-1, to=2-2]
	   \arrow["{\Pi^G/G}", from=1-3, to=2-2]
    \end{tikzcd}\]
    such that the family $\mathcal{X}$ is bimeromorphic over $V'$ to $\mathcal{T}/G$, the central fiber $\mathcal{X}_0:=\pi^{-1}(0)$ is isomorphic to $X$, and such that the subset
    \begin{equation*}
        A:=\{a\in V'\vert\pi^{-1}(a)\;\mathrm{is\;algebraic}\}\subset V'
    \end{equation*}
    is dense in the euclidean topology.\par
    By \Cref{deformations_no_zeros}, there is a small open neighborhood $U\subset V'$ of $0\in V'$ such that the fibers $\mathcal{X}_u$ for $u\in U$ admit a holomorphic 1-form without zeros. As $A\subset V'$ is dense, we can fix a point $a\in A\cap U$. By construction, $\mathcal{X}_a$ is birational to the elliptic fibration $\Pi^{-1}(a)/G\rightarrow Y_1$. Thus, $\mathcal{X}_a$ has Kodaira codimension 1. Let $X^{min}_a$ be a good minimal model of $\mathcal{X}_a$, which exists by \cite[][Thm. 4.4]{Lai_varieties_fibered_by_gmm}. By \cite[][Lem. 3.1]{Hao_Nowhere_Vanishing_Holomorphic_One-Forms_on_Varieties_of_Kodaira_Codimension_One}, the general fiber of the Iitaka fibration $f_a\colon X_a^{min}\rightarrow Z$ is isomorphic to a fixed elliptic curve. As the Iitaka fibration is unique up to birational equivalence \cite[][Thm. 5]{Iitaka_fibration}, $f_a\colon X_a^{min}\rightarrow Z$ is birational to $\Pi^{-1}(a)/G\rightarrow Y_1$. Therefore, the general fiber of $\Pi^{-1}(a)/G\rightarrow Y_1$ is isomorphic to a fixed elliptic curve. Thus, the same holds true for $\Pi^{-1}(a)\rightarrow Y_2$, and consequently also for $\Pi'^{-1}(a)\rightarrow Y_2$, where $\Pi'$ is the map in the diagram from \Cref{tautological_families} \cref{diagram_tautological_m_Weierstrass}:
    \[\begin{tikzcd}
	    {\mathcal{T}} && {\mathcal{W}} \\
	    & {Y_2\times V.}
	    \arrow["m", from=1-1, to=1-3]
	    \arrow["\Pi"', from=1-1, to=2-2]
	    \arrow["{\Pi'}", from=1-3, to=2-2]
    \end{tikzcd}\]
    As all elliptic fibrations $\Pi'^{-1}(v)\rightarrow Y_2$ are locally over $Y_2$ isomorphic to $W(\mathcal{L},\alpha,\beta)\rightarrow Y_2$, we conclude that the general fiber of $W(\mathcal{L},\alpha,\beta)\rightarrow Y_2$ must be isomorphic to a fixed elliptic curve $E=\C/\Lambda$. Write $E=\{y^2z-(x^3+axz^2+bz^3)=0\}\subset\mathbb{P}^2$. Then $W(\OO_{Y_2},a,b)\cong Y_2\times E\rightarrow Y_2$ is a minimal Weierstraß model that induces the variation of Hodge structures $H$. As minimal Weierstraß models are unique, it follows that $W(\mathcal{L},\alpha,\beta)\cong W(\OO_{Y_2},a,b)$. In particular, $\mathcal{L}\cong\OO_{Y_2}$. By \cite[][Lem. 1.3.5]{Nakayama_elliptic-fibrations}, every meromorphic section of $Y_2\times E\rightarrow Y_2$ is holomorphic. Thus, the sheaves $\mathcal{W}^\#$ and $\mathcal{W}^{mer}$ agree, and, by \Cref{corollary_isotrivial_and_trivial_vhs}, they are isomorphic to $\OO_{Y_2}/\Lambda$. In particular, the composition in \cref{composition_construction_injection_into_H1(Wmer)} is of the form
    \begin{equation*}
        \id\colon H^1_G(Y_2,\OO_{Y_2}/\Lambda)\longrightarrow\mathcal{E}_G(Y_2,\Delta_2,H)\longhookrightarrow H^1_G(Y_2,\OO_{Y_2}/\Lambda).
    \end{equation*}
    Therefore, the class $\eta_G$ lies in $H^1_G(Y_2,\OO_{Y_2}/\Lambda)$, and thus $f_2\colon X_2\rightarrow Y_2$ is $G$-equivariantly bimeromorphic to $(Y_2\times E)^{\eta_G}\rightarrow Y_2$.\par
    By Remark \Cref{facts_Weierstrass_modles}, we obtain a $G$-action on $\OO_{Y_2}$. This action is determined by a group homomorphism $\rho\colon G\rightarrow\C^*$. Let $\mathcal{M}\in\pic(Y_1)$ be the quotient of $\OO_{Y_2}$ by this action. Note that this is a torsion line bundle. Denote by $n$ the order, and define
    \begin{equation*}
        Y_3:=\mathrm{Spec}\left(\bigoplus\limits_{i=0}^{n-1}\mathcal{M}^i\right)\longrightarrow Y_1.
    \end{equation*}
    This is a finite \'etale cover of $Y_1$. Moreover, if we denote by $G_1$ the kernel of $\rho$, then $Y_3\cong Y_2/G_1$. By construction, $G_1$ acts trivially on $H$. Thus, we can apply \Cref{trivial_vhs_constant_j_all_invariant} to conclude that $G_1$ acts diagonally on $(Y_2\times E)^{\eta}$ via the given action on $Y_2$ and via translations with $\sigma(g)\in E$ on the fibers, where $\sigma\colon G_1\rightarrow E$ is a group homomorphism. Let $G_2$ be the kernel of $\sigma$. Then
    \begin{equation*}
        (Y_2\times E)^\eta/G_2\cong (Y_2/G_2\times E)^\eta.
    \end{equation*}
    Moreover, the quotient group $G_1/G_2$ acts fixed point freely on $(Y_2/G_2\times E)^\eta$. In particular, the quotient map
    \begin{equation*}
        (Y_2/G_2\times E)^\eta\longrightarrow (Y_2/G_2\times E)^\eta/(G_1/G_2)
    \end{equation*}
    is finite \'etale.\par
    Note that by construction, $X$ is bimeromorphic to $X_1$. As $Y_3\rightarrow Y_1$ is finite \'etale, also $X_3:=X_1\times_{Y_1}Y_3\rightarrow X_1$ is finite \'etale. As $X_1$ is smooth, this implies that there exists a finite \'etale cover $X'\rightarrow X$ such that $X'$ is bimeromorphic to $X_3$. By construction, $X_3\cong (Y_2/G_2\times E)^\eta/(G_1/G_2)$. As $(Y_2/G_2\times E)^\eta\longrightarrow (Y_2/G_2\times E)^\eta/(G_1/G_2)$ is finite \'etale, there exists a further finite \'etale cover $X''\rightarrow X'$ such that $X''$ is bimeromorphic to $(Y_2/G_2\times E)^\eta$, which concludes the proof of \Cref{precise_main_result}.
\end{proof}
\printbibliography%[heading=bibintoc, title={References}]
\end{document}